\documentclass[preprint,3p,12pt,nonatbib]{elsarticle}

\usepackage[english]{babel}
\usepackage{csquotes}

\usepackage[
    backend=biber,
    style=numeric-comp,  
    sorting=nyt,         
    giveninits=true,     
    maxbibnames=99,      
    doi=true,            
    isbn=false,          
    url=false            
]{biblatex}

\usepackage{xurl}

\DeclareDelimFormat{nametitledelim}{\addcomma\space}

\renewbibmacro{in:}{}

\DeclareNameAlias{author}{given-family}
\DeclareNameAlias{default}{given-family}

\DeclareFieldFormat[article,inproceedings,book,incollection]{title}{#1}
\DeclareFieldFormat{journaltitle}{#1}
\DeclareFieldFormat{booktitle}{#1}

\renewbibmacro*{volume+number+eid}{%
  \printfield{volume}%
  \setunit*{\addspace}%
  \printfield{number}%
  \setunit{\addspace}%
  \printfield{eid}}
\DeclareFieldFormat[article]{number}{\mkbibparens{#1}}

\DeclareFieldFormat{pages}{#1\addperiod}

\DeclareFieldFormat{doi}{%
  \href{https://doi.org/#1}{\nolinkurl{doi:#1}}}

\usepackage{amsmath, amssymb, amsthm, mathtools}

\usepackage{enumitem} 
\usepackage{xcolor}   

\usepackage{setspace}
\usepackage{tikz}
\usetikzlibrary{shapes, arrows, arrows.meta, decorations.pathmorphing, backgrounds, positioning, fit, petri, automata}

\usepackage[
    pdfstartview=XYZ,
    bookmarks=true,
    colorlinks=true,
    linkcolor=blue,
    urlcolor=blue,
    citecolor=blue,
    linktocpage=true,
    hyperindex=true
]{hyperref}

\theoremstyle{plain}
\newtheorem{thm}{Theorem}[section]
\newtheorem{lma}[thm]{Lemma}
\newtheorem{coro}[thm]{Corollary}
\newtheorem{clm}{Claim}

\theoremstyle{definition} 
\newtheorem{rmk}[thm]{Remark}

\numberwithin{equation}{section}

\begin{document}

\begin{frontmatter}

    \title{The matching extendability of optimal 2-planar graphs\tnoteref{t1}}
    \tnotetext[t1]{This work is supported by NSFC (Grant No. 12271229).}

    \author{Xinyao Li}
    \ead{lxinyao2024@lzu.edu.cn}

    \author{Heping Zhang\corref{cor1}}
    \ead{zhanghp@lzu.edu.cn}

    \cortext[cor1]{Corresponding author}
    \address{School of Mathematics and Statistics, Lanzhou University, Lanzhou 730000, P. R. China}

    \begin{abstract}
        A graph is \emph{2-planar} if it can be drawn in the plane such that each edge is crossed by at most two other edges. It is known that for a 2-planar graph $G$, $|E(G)| \le 5|V(G)| - 10$. When the equality holds, we call $G$ an \emph{optimal 2-planar} graph. This paper investigates the matching extendability of optimal 2-planar graphs. By local optimality, we prove that every 4-connected optimal 2-planar graph $G$ of even order is 1-extendable, and give a criterion for $G$ to be 2-extendable. We also prove that no optimal 2-planar graph is 5-extendable and construct a 4-extendable optimal 2-planar graph based on the dodecahedron. Finally, we show that every 6-connected optimal 2-planar graph of even order with at least $2m+2$ vertices is distance 3 $m$-extendable for any $m \ge 0$.
    \end{abstract}

    \begin{keyword}
        $n$-extendable graph \sep 2-planar graph \sep optimal 2-planar graph \sep distance $d$ $m$-extendable graph
    \end{keyword}

\end{frontmatter}

\section{Introduction}\label{sec:introduction}

The matching extendability of graphs embedded on surfaces has been extensively investigated. Plummer~\cite{plummer1988theorem,plummer1992extending}, Fujisawa et al.~\cite{fujisawa2018matching} and Zhang et al.~\cite{ZHANG2023247} respectively obtained some results on the (maximum) matching extendability of planar graphs, optimal 1-planar graphs and general 1-planar graphs. However, any matching extendability of 2-planar graphs has not been revealed yet. This article considers optimal 2-planar graphs. 

The concept of $n$-extendable graphs was originally introduced by Plummer~\cite{plummer1980n}. For a given integer $n \ge 1$, a connected graph $G$ with at least $2n+2$ vertices is said to be \emph{$n$-extendable} if $G$ contains a perfect matching and every matching of size $n$ in $G$ is a subset of a perfect matching.

For planar graphs, Plummer first established the following result.

\begin{thm}[M. D. Plummer~\cite{plummer1988theorem}] \label{thm:planar-ext-bound}
    No planar graph is 3-extendable.
\end{thm}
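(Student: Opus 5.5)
Suppose, for a contradiction, that $G$ is a $3$-extendable planar graph. The plan is to combine two standard facts about extendable graphs with Euler's formula.

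\textbf{Step 1: minimum degree.} An $n$-extendable graph is $(n+1)$-connected (Plummer), so $G$ is $4$-connected and $\delta(G)\ge 4$. For a planar graph, $|E|\le 3|V|-6$ forces $\delta(G)\le 5$. Hence every vertex has degree at least $4$, and some vertex has degree $4$ or $5$.

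\textbf{Step 2: degree-$5$ vertex.} Take a plane embedding and a vertex $v$ with $\deg v = 5$. Let its neighbours be $u_1,\dots,u_5$ in rotation order.
\begin{itemize}[leftmargin=*]
\item If the neighbourhood contains three pairwise independent edges $e_1,e_2,e_3$ avoiding $v$, let $M=\{e_1,e_2,e_3\}$. Then $M$ covers at least five vertices of $N(v)$, so $v$ has no available partner and $M$ lies in no perfect matching.
\item More generally, it suffices to find a matching $M$ of size $3$ that covers all of $N(v)$ but not $v$.
\end{itemize}
The faces around $v$ supply edges $u_iu_{i+1}$ whenever those faces are triangles. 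When they are not triangles, I would use $4$-connectivity together with planarity to locate the missing edges.

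\textbf{Step 3: degree-$4$ vertex.} If $\deg v = 4$, the argument is easier. Choose two independent edges covering $N(v)\setminus\{x\}$ for some neighbour $x$, plus one edge covering $x$ but not $v$. Then $v$ is left unmatched.

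\textbf{Main obstacle.} The hard part is that the neighbourhood need not contain the required edges. Plummer's actual argument is therefore global: a counting (discharging) argument over the embedding. For each vertex of degree at most $5$ one constructs a local bad $3$-matching. If none exists, the local structure forces many non-triangular faces, and these extra faces contradict Euler's formula $|V|-|E|+|F|=2$ together with $\delta\ge 4$.

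Concretely, I would assign each vertex charge $\deg-6$ and each face charge $2\deg-6$, so the total charge is $-12$. I would then show that the non-existence of such a matching lets us redistribute charge until everything is non-negative, which contradicts the negative total. The key local lemma to prove is the following: a vertex of degree $4$ or $5$ all of whose incident faces are triangles always admits a bad $3$-matching. Here $4$-connectivity is used to guarantee that the extra edge needed in Step 3 avoids $v$ and its other neighbours.
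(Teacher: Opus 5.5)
Your overall architecture is the right one: $4$-connectivity and $4\le\delta(G)\le 5$, then local obstructions at vertices of degree $4$ and $5$, then an Euler count. Your charges ($d(v)-6$ on vertices, $2d(f)-6$ on faces, total $-12$) can also be made to work. The gap is in the local input.

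Steps~2 and~3 try to \emph{find} edges $u_iu_{i+1}$ in $G[N(v)]$. When the faces at $v$ are not triangles, such edges need not exist at all, so no appeal to $4$-connectivity or planarity will ``locate'' them. The only local statement you actually commit to is that a vertex of degree $4$ or $5$ whose incident faces are all triangles admits a bad $3$-matching. That only tells you each such vertex has \emph{one} non-triangular face. This cannot cover a deficit of $2$ at a degree-$4$ vertex (or $1$ at a degree-$5$ vertex) when a quadrilateral has only $+2$ to split among its four corners.

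Concretely, take the square antiprism $C_8^2$. It is $4$-connected and planar, and every vertex has degree $4$ and lies on exactly one quadrilateral and three triangles. Its charges sum to $-16+4=-12$, with no contradiction. Since it satisfies every hypothesis your argument would use, no redistribution rule can finish the proof from that lemma.

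What you need is a much stronger local fact, obtained by running the local argument in the opposite direction. Use Dean's lemma (Lemma~\ref{lma:dean}, the same tool the paper uses for Theorem~\ref{thm:o2pg-not-5ext}) with $n=3$.
\begin{itemize}
\item If $d(v)=4=3+1$, then $G[N(v)]$ has no edge, so \emph{no} face at $v$ is a triangle.
\item If $d(v)=5=3+2$, then $G[N(v)]$ has no two independent edges. Any three of the five cyclic pairs $\{u_i,u_{i+1}\}$ contain two disjoint ones, so at most two of the five faces at $v$ are triangles.
\end{itemize}
Dean's lemma also supplies the partner for the leftover neighbour, which you were trying to arrange by hand in Step~3.

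With these facts, let each face of length $k\ge 4$ send $(2k-6)/k\ge \tfrac12$ to each of its corners. Then faces end at charge $0$. Degree-$4$ vertices end at least $-2+4\cdot\tfrac12=0$, and degree-$5$ vertices at least $-1+3\cdot\tfrac12>0$. Vertices of degree at least $6$ stay non-negative. This contradicts the total $-12$.

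The paper only quotes this theorem from Plummer, so there is no in-paper proof to compare against. As written, however, your plan does not close.
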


In 1992, Plummer~\cite{plummer1992extending} further explored 1- and 2-extendable planar graphs.

\begin{thm}[M. D. Plummer~\cite{plummer1992extending}] \label{thm:planar-1ext}
    Every 4-connected planar graph of even order is 1-extendable.
\end{thm}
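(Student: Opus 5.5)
Let $G$ be a 4-connected planar graph of even order and fix an edge $e=uv$. We need a perfect matching of $G$ that contains $e$, which is the same as a perfect matching of $G'=G-u-v$. We apply Tutte's theorem to $G'$. Suppose $G'$ has no perfect matching. Since $|V(G')|$ is even, Tutte's theorem gives $S'\subseteq V(G')$ with $o(G'-S')\ge |S'|+2$, by parity. Put $S=S'\cup\{u,v\}$. Then $G-S=G'-S'$, so $s=|S|$ satisfies $o(G-S)\ge s$. Write $q=o(G-S)$. The aim is to contradict planarity.

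First, the easy bounds. Every component $C$ of $G-S$ has $N(C)\subseteq S$, so 4-connectivity gives $|N(C)|\ge 4$ whenever the rest of the graph is nonempty. That holds here because $q\ge s\ge 2$ and $q\ge 2$. Also $s\ge 4$ is forced: if $s\le 3$, then $S$ would be a separating set of size less than 4, since $q\ge 2$.

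Second, we count edges in a bipartite planar graph. Contract each odd component of $G-S$ to a single vertex, and delete the even components and all edges inside $S$. This gives a bipartite planar graph $H$ with parts $S$ and the $q$ contracted vertices, where every contracted vertex has degree at least 4. A simple bipartite planar graph with at least 3 vertices has at most $2|V(H)|-4$ edges. Hence
\[ 4q \le |E(H)| \le 2(s+q)-4, \]
so $q\le s-2$, which contradicts $q\ge s$. Therefore $G'$ has a perfect matching, and $e$ extends to a perfect matching of $G$.

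The main obstacle is keeping $H$ simple and planar after contraction. Contracting connected subgraphs preserves planarity, and deleting parallel edges keeps each contracted vertex adjacent to at least 4 distinct vertices of $S$. This is why we use $|N(C)|\ge 4$ and not degree counts. We should also note that $|V(H)|\ge 3$ holds, so the edge bound applies. Nothing about $e$ enters except the parity bookkeeping $o(G-S)\ge |S|$; the edge is absorbed by adding $u,v$ to the barrier.
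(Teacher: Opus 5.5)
Your proof is correct. The paper only cites Plummer's theorem and gives no proof of it, but your argument is exactly the template the paper uses for Theorem~\ref{thm:o2pg-1ext}: the Tutte set $S=S'\cup\{u,v\}$, the bipartite reduction $B(G,S)$, and the count $4|Y|\le 2(|S|+|Y|)-4$. The one difference is that in the planar case the planarity of the reduction is immediate from minor-closedness, whereas in the 2-planar setting the paper has to establish it separately in Lemma~\ref{lma:bipartite-planar}.
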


\begin{thm}[M. D. Plummer~\cite{plummer1992extending}] \label{thm:planar-2ext}
    Every 5-connected planar graph of even order is 2-extendable.
\end{thm}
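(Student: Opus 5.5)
We prove the contrapositive with Tutte's theorem and Euler's formula. Let $G$ be a 5-connected planar graph of even order, and suppose it is not 2-extendable. First, $G$ is 1-extendable by Theorem~\ref{thm:planar-1ext}, since 5-connected implies 4-connected; in particular $G$ has a perfect matching. Also $|V(G)| \ge 6$: the minimum degree is at least 5, so $|V(G)| \ge 6$, and the order is even.

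Fix two independent edges $e_1=u_1v_1$ and $e_2=u_2v_2$ that lie in no common perfect matching, and let $G' = G - \{u_1,v_1,u_2,v_2\}$. Then $G'$ has no perfect matching. By Tutte's theorem there is a set $S' \subseteq V(G')$ with $o(G'-S') > |S'|$. By parity, $o(G'-S') \ge |S'|+2$. Put $S = S' \cup \{u_1,v_1,u_2,v_2\}$ and $s = |S|$. Then $G-S$ has at least $s-2$ odd components. The standard maximality trick lets us assume every component of $G-S$ is odd: enlarge $S'$ maximally, so even components disappear and odd components are factor-critical. Since $G$ is 5-connected, each component $C$ of $G-S$ has at least 5 neighbours in $S$, or $S$ is all of $V(G)\setminus C$; the latter cannot happen because there are at least two components once $s\ge 4$.

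Now contract each component of $G-S$ to a single vertex and delete the edges inside $S$. This gives a planar bipartite graph $H$ with parts $S$ and $\mathcal C$, where $|\mathcal C| = c \ge s-2$ and every vertex of $\mathcal C$ has degree at least 5. Because $H$ is a simple planar bipartite graph with at least 3 vertices, $|E(H)| \le 2(s+c)-4$. Combining this with $|E(H)| \ge 5c$ gives $5c \le 2s+2c-4$, so $3c \le 2s-4$. Substituting $c\ge s-2$ yields $3s-6 \le 2s-4$, hence $s \le 2$. This contradicts $s\ge 4$.

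The argument seems to close easily, so the main obstacle is checking the degenerate cases rather than the counting. Specifically, we must justify that $c\ge 2$, so that 5-connectivity really forces 5 neighbours in $S$ for each component, and that $H$ has no multiple edges after contraction. The first holds because $c \ge s-2 \ge 2$. The second holds because each edge of $H$ joins a distinct pair (component, vertex of $S$), and we keep only one edge per such pair. We also need the bipartite edge bound when $H$ might be a forest; in that case the bound $|E|\le s+c-1$ is even stronger, so the conclusion still follows. If these checks somehow failed, the fallback is to use the refined count $c \ge s-2$ together with face-length 4 arguments, as in Plummer's original proof.
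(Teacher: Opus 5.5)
Your proof is correct. The paper only cites this result from Plummer, but your argument is the same Tutte-set, bipartite-contraction and Euler-count method the paper uses for Theorem~\ref{thm:o2pg-2ext} and Corollary~\ref{coro:o2pg-5conn-2-ext}. In the planar case the contracted bipartite graph is automatically planar, so no analogue of Lemma~\ref{lma:bipartite-planar} is needed, and 5-connectivity makes $5c \le 2s+2c-4$ incompatible with $c \ge s-2$. The maximality trick and the closing ``fallback'' remarks are unnecessary, since you can simply discard even components as in the construction of $B(G,S)$.
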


This line of research has recently been extended to beyond-planar graphs. The \emph{1-planar graphs} form a well-known graph class: in some drawing in the plane, each edge is crossed by at most one other edge. A 1-planar graph has at most $4|V(G)| - 8$ edges, which was established by Pach and Tóth~\cite{Pach1997crossing}. A 1-planar graph is \emph{optimal} if it has exactly $4|V(G)| - 8$ edges. Recently, Fujisawa et al.~\cite{fujisawa2018matching} investigated the matching extendability of optimal 1-planar graphs and obtained the following results.

\begin{thm}[J. Fujisawa, K. Segawa, Y. Suzuki~\cite{fujisawa2018matching}] \label{thm:1-planar-1ext}
    Every optimal 1-planar graph of even order is 1-extendable.
\end{thm}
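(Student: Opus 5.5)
We need to prove Theorem \ref{thm:1-planar-1ext}: every optimal 1-planar graph $G$ of even order is 1-extendable. The plan uses the standard structure of optimal 1-planar graphs: $G$ is obtained from a 3-connected planar quadrangulation $Q$ by inserting both diagonals in every face. Thus $E(G)=E(Q)\cup E_{\times}$, and $G$ is 4-connected. The planar quadrangulation $Q$ is bipartite. Its bipartition classes $B$ and $W$ satisfy that every crossing edge joins two vertices of the same class.

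We argue by contradiction. Suppose an edge $e=uv$ lies in no perfect matching. Then $G'=G-u-v$ has no perfect matching. By Tutte's theorem, there is a set $S'\subseteq V(G')$ with $o(G'-S')\ge |S'|+2$; the parity is even because $|V(G')|$ is even. Set $S=S'\cup\{u,v\}$, so $s=|S|$ and $G-S$ has at least $s$ odd components. Let $k$ be the number of components of $G-S$, so $k\ge s$. Since $G$ is 4-connected, every component $C$ has $|N(C)|\ge 4$, with $N(C)\subseteq S$.

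We now count using the planar structure. Contract each component of $G-S$ to a single vertex. This yields a bipartite "component–$S$" graph $H$ with $k$ component vertices and $s$ vertices of $S$. Because $G$ is 1-planar, $H$ inherits a 1-planar drawing. We refine the argument by working in $Q$: each component meets the faces of $Q$ in a controlled way.

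The key claim is that $H$, with edges from $S$ to components, is in fact planar after a suitable choice. The reason is that two components $C_1,C_2$ cannot both use the two crossing diagonals of one face of $Q$. The diagonals of a face $abcd$ are $ac$ and $bd$. If $a,c\in C_1$ and $b,d\in C_2$, we get a contradiction, since $ab$ is an edge of $Q$ joining $C_1$ and $C_2$, which are distinct components of $G-S$. Hence every crossing pair of edges in a face involves at most one edge from component to $S$ that is genuinely needed. So we may route adjacencies through $Q$-edges or non-crossing diagonals and obtain a planar bipartite simple graph $H$. Each component vertex of $H$ has degree at least 4 by 4-connectivity.

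The bipartite planar bound then gives $|E(H)|\le 2(k+s)-4$. Combined with $|E(H)|\ge 4k$, this yields $2k\le 2s-4$, so $k\le s-2<s$. This contradicts $k\ge s$, hence every edge lies in a perfect matching.

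The main obstacle is making the planarity of $H$ rigorous. In particular, we must show that 4-connectivity of each component's attachment survives when only a planar subset of the edges from components to $S$ is kept. Merely taking 1-planarity would give a bound too weak for the count. The approach is to choose, for each face of $Q$ containing a crossing, one diagonal to delete. The deleted diagonal is chosen so that it is either inside $S$ or inside a component; the face analysis above shows such a choice always exists. This keeps $\deg_H(C)\ge 4$, or else we handle the exceptional faces separately.

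If that fails, the fallback is a different count. We count the quadrangular faces of $Q$ incident with components and compare with Euler's formula $|E(Q)|=2|V(Q)|-4$. Each odd component forces sufficiently many $S$-vertices around it, again giving $k\le s-2$.
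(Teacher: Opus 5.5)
Your overall plan is the right one: take a Tutte set $S\supseteq\{u,v\}$, contract the components of $G-S$ to a bipartite graph $H$, and play $|E(H)|\ge 4k$ against $|E(H)|\le 2(k+s)-4$. This is the template the paper uses for its 2-planar analogue (Lemma~\ref{lma:bipartite-planar} and the proof of Theorem~\ref{thm:o2pg-1ext}); the 1-planar statement itself is only cited there.

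The gap is exactly the step you flag as the main obstacle. You never show that $H$ is planar while every component keeps all of its neighbors in $S$.
\begin{itemize}
\item Your face observation (two distinct components cannot own the two diagonals of one face) is true but beside the point, since such components would already be joined by a $Q$-edge.
\item The rule you then propose, deleting in each face a diagonal that lies inside $S$ or inside a component, is not always available. In a face $abcd$ with $a,b\in C$ and $c,d\in S$, both $ac$ and $bd$ join $C$ to $S$.
\item Even where the rule applies it needs a priority. With $a,c\in C$ and $b,d\in S$, deleting $ac$ rather than $bd$ may disconnect $C$.
\item ``Handle the exceptional faces separately'' and the fallback count are not arguments.
\item The remark that $H$ inherits a 1-planar drawing should be dropped, since contraction does not preserve 1-planarity in general.
\end{itemize}

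The missing idea is that, once the edges inside $S$ are deleted, any face whose two diagonals both survive has redundant diagonals. In such a face, $S$ contains at most one of $a,c$ and at most one of $b,d$. So the $S$-corners of the face are at most two consecutive vertices of the 4-cycle. The remaining corners form a path of $Q$-edges, hence lie in a single component $C$, and every $S$-corner has a boundary neighbor in $C$.

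Deleting both diagonals of every such face therefore keeps each component connected and leaves each $N(C)$ unchanged. The result is a plane graph, since every remaining diagonal has lost its crossing partner. Contracting the connected components is then a minor operation on a plane graph. So $H$ is a simple planar bipartite graph with $\deg_H(C)=|N(C)|\ge 4$, and your count closes.

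This is the skeleton-level counterpart of the paper's local-optimality route, in which your troublesome face is handled automatically. Contracting the uncrossed edge $ab$ collapses the $K_4$ on $\{a,b,c,d\}$ to a triangle. After all contractions, a crossing edge between $S$ and a contracted vertex lies in a $K_4$ with exactly one contracted vertex, because contracted vertices are pairwise nonadjacent. Its crossing partner therefore lies inside $S$ and is deleted.
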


To characterize the 2-extendability of optimal 1-planar graphs, they introduced the concept of a barrier cycle. A separating cycle $C$ in a 1-planar graph is a \emph{barrier cycle} if every edge of $C$ is uncrossed, and $G-V(C)$ consists of two odd components. Based on this concept, they established the following criteria.

\begin{thm}[J. Fujisawa, K. Segawa, Y. Suzuki~\cite{fujisawa2018matching}] \label{thm:1-planar-2ext}
    Every optimal 1-planar graph of even order is 2-extendable unless it contains a barrier cycle of length 4.
\end{thm}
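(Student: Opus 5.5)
The plan is to argue via Tutte's theorem, in the form used by Plummer for $n$-extendability. Let $G$ be an optimal 1-planar graph of even order. By Theorem~\ref{thm:1-planar-1ext}, $G$ has a perfect matching. Suppose $G$ is not 2-extendable, and choose independent edges $e_1=ab$ and $e_2=cd$ that lie in no common perfect matching. Then $G'=G-\{a,b,c,d\}$ has no perfect matching. By Tutte's theorem there is $S'\subseteq V(G')$ with $o(G'-S')\ge |S'|+2$, by parity. Put $S=S'\cup\{a,b,c,d\}$. Then $G-S$ has at least $|S|-2$ odd components, and $S$ contains the two independent edges $ab, cd$.

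The next step is a counting argument using the structure of optimal 1-planar graphs. Such a graph is obtained from a 3-connected planar quadrangulation $Q$ by adding both diagonals in every face. Hence $G$ is 4-connected, and each vertex $v$ has $\deg_G(v)=2\deg_Q(v)$. I would contract each component of $G-S$ to a vertex and delete the edges inside $S$. This gives a bipartite 1-planar graph $H$ with parts $S$ and the set of components, say $k\ge |S|-2$ of them, each of degree at least $4$ by 4-connectivity. The bipartite 1-planar edge bound, roughly $3|V(H)|-8$, should then combine with $e(H)\ge 4k$ to force near-equality everywhere. Near-equality should mean three things. First, every odd component is a single vertex, or contributes exactly 4 edges. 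Second, $G[S]$ has very few edges. Third, the only way to fit $ab$ and $cd$ in $S$ is for $G-S$ to have exactly two odd components with $|S|=4$. Concretely, I would sharpen the count by exploiting the quadrangulation: each face of $Q$ meeting $S$ contributes its crossing diagonals. This shows that if two edges of $S$ exist, the structure collapses to $|S|=4$ with $k=2$.

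Finally, I would identify the resulting configuration as a barrier cycle of length 4. With $|S|=4$ separating $G$ into two odd components and $S\supseteq\{a,b,c,d\}$, the local structure should make $S$ a separating 4-cycle $a b \ldots$ formed by uncrossed edges of $Q$. This follows because a 4-cut in an optimal 1-planar graph must consist of the boundary of a separating quadrangle in $Q$, whose edges are the uncrossed ones. This contradicts the hypothesis, so $G$ is 2-extendable.

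The main obstacle is the middle counting step: getting a bound tight enough to exclude all configurations with $|S|\ge 5$, while keeping track of the edges $ab$ and $cd$ inside $S$. This may require a case analysis on whether these edges are crossed edges (diagonals) or edges of $Q$. If the generic bipartite bound is too weak, I would fall back on working directly in $Q$. There, each odd singleton component $v$ has all its $Q$-neighbours and diagonal neighbours in $S$, and Euler's formula for the quadrangulation gives the sharper count.
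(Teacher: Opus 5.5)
Your Tutte-set setup is fine, but the middle step fails, and the fallback does not repair it. With $|V(H)|=|S|+k$ and $e(H)\ge 4k$, the bipartite 1-planar bound gives only $4k\le 3(|S|+k)-8$, i.e.\ $k\le 3|S|-8$. This is compatible with $k\ge |S|-2$ for every $|S|\ge 3$, so no near-equality is forced. (Even the 1-planarity of $H$ needs proof, since contracting a component of a 1-plane graph can leave an edge crossed many times.)

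The missing idea is that the reduced graph is actually \emph{planar}. In an optimal 1-planar graph every crossing pair spans a $K_4$ whose other four edges are uncrossed. This local optimality survives vertex deletion, contraction of uncrossed edges, and deletion of one edge of a crossing pair. So contract each odd component along its uncrossed edges. Any crossed edge still inside it then crosses an edge with both ends in $S$, which you delete before continuing. Once every odd component is a single vertex, these vertices are pairwise nonadjacent. Hence each crossing $K_4$ contains at most one of them, and every crossed edge between $S$ and $Y$ crosses an edge inside $S$. Deleting the edges inside $S$ thus leaves a planar bipartite $B(G,S)$. Then $4|Y|\le |E(B(G,S))|\le 2(|S|+|Y|)-4$ gives $|Y|=|S|-2$, with every odd component having exactly four neighbours in $S$. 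This is the mechanism of Lemmas~\ref{lma:locally-optimal-preservation} and~\ref{lma:bipartite-planar}, transplanted to the 1-planar case. Your fallback in $Q$ handles only singleton components. In the equality case there are none, since a singleton has degree $2\deg_Q(v)\ge 6$.

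The endgame also targets the wrong set. The count does not bound $|S|$, and $S\supseteq\{a,b,c,d\}$ need not be a 4-cycle. Suppose $Q$ contains a sub-quadrangulation $P$ each of whose faces encloses an odd number of vertices. Then two far-apart independent edges $ab,cd$ of $P$ form a non-extendable matching (take $S'=V(P)\setminus\{a,b,c,d\}$). Yet $\{a,b,c,d\}$ is not a cut, so every Tutte set $S'$ is nonempty and $|S|>4$. The barrier cycle need not meet $M$ at all.

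Instead, take any odd component $O$. Since $|S|\ge 4$ gives $|Y|\ge 2$, $N(O)$ is a 4-cut, and you must then prove the structural fact you only assert. Each face of $Q$ meeting $O$ induces a $K_4$, so its vertices lie in $O\cup N(O)$. Hence the boundary of the union of these faces consists of uncrossed edges inside $N(O)$, and bipartiteness of $Q$ makes it a separating 4-cycle $C$. The same argument applied to a component on the other side shows that $G-V(C)$ has exactly two components. Parity makes both odd, which gives the barrier 4-cycle.
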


\begin{thm}[J. Fujisawa, K. Segawa, Y. Suzuki~\cite{fujisawa2018matching}] \label{thm:1-planar-3ext}
    Let $G$ be a 5-connected optimal 1-planar graph of even order and $M$ be a matching of $G$ with $|M| = 3$. Then $M$ is extendable unless $G$ contains a barrier cycle $C$ of length 6 such that $V (M) = V (C)$.
\end{thm}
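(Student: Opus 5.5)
We prove Theorem~\ref{thm:1-planar-3ext} by contradiction through a Tutte-type barrier, using the rigid structure of optimal 1-planar graphs. Write $M=\{a_1b_1,a_2b_2,a_3b_3\}$ and $G'=G-V(M)$. If $M$ does not extend to a perfect matching of $G$, then $G'$ has no perfect matching. By Tutte's theorem there is a set $S\subseteq V(G')$ with $o(G'-S)\ge |S|+2$, where parity forces the $+2$ since $|V(G')|$ is even. Put $X=S\cup V(M)$. Then $G-X$ has at least $|X|-4$ odd components, with $|X|=|S|+6$.

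Recall that an optimal 1-planar graph $G$ is obtained from a 3-connected quadrangulation $Q$ by inserting both diagonals in every face. The uncrossed edges of $G$ are exactly those of $Q$, and $Q$ is bipartite. The plan is to count edges between $X$ and the components of $G-X$, using the 5-connectivity of $G$ together with this structure. Each component $K$ of $G-X$ has $|N(K)|\ge 5$, since $N(K)\subseteq X$ separates $K$ from the rest. Contract every component of $G-X$ to a single vertex and delete the edges inside $X$. The result is a bipartite 1-planar graph $H$ between $X$ and the component-vertices. A bipartite 1-planar graph on $p$ vertices has at most $3p-8$ edges (Karpov-type bound), and in our setting the bound derivable from the quadrangulation structure should suffice. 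Let $c$ be the number of components. This gives $5c\le 3(|X|+c)-8$, so $c\le (3|X|-8)/2$, which is too weak on its own. I therefore expect the counting to be sharpened using the facts that $Q$ is a quadrangulation and that every face of $G$ is a triangle.

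The sharper route is local. Each odd component $K$ is bounded, in the drawing, by a closed curve through vertices of $X$. Because every face of $Q$ carries both diagonals, the neighborhood of $K$ contains a cycle of $Q$ whose vertices lie in $X$. I would show two things. First, each component is enclosed by a separating cycle of $Q$ of length at least $6$: length $4$ is excluded by 5-connectivity, since a separating 4-cycle of $Q$ gives a 4-cut of $G$, and bipartiteness forces even length. Second, the components together with $X$ form a plane structure in which Euler's formula applies to $Q$ restricted to $X$ plus one face per component. Counting with faces of length at least $6$ around components and at least $4$ elsewhere, Euler's formula for the induced quadrangulation-like graph on $X$ gives $c\le |X|-4$. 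Equality holds only in an extremal configuration: exactly two components, each bounded by a 6-cycle, with $|S|=0$.

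Analysing the equality case is the main obstacle. Pinning it down requires showing that no edges of $Q$ run between $X$-vertices beyond the two boundary cycles. It also requires showing that the two boundary 6-cycles coincide as a single cycle $C$, so $X=V(C)$ and hence $S=\emptyset$ and $V(M)=V(C)$. Finally, $C$ must consist of uncrossed edges with $G-V(C)$ having exactly two odd components, which makes $C$ a barrier cycle of length $6$. I would handle this by examining the faces of $Q$ incident to $C$. Any additional vertex or crossing edge across $C$ would create a component that is not odd, or would break the tightness of the Euler count. The conclusion is that $M$ fails to extend only when the stated barrier cycle exists.
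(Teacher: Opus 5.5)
The paper only cites this theorem, so there is no in-paper proof to compare with. Its own analogues (Theorems~\ref{thm:o2pg-2ext} and~\ref{thm:o2pg-dist3-mext}) go through the planar bipartite graph $B(G,S)$ of Lemma~\ref{lma:bipartite-planar}. Your route through the faces of $H=Q[X]$ is genuinely different, viable, and in fact sharper. $B(G,X)$ deletes the quadrangulation edges inside $X$, so here it gives only $3c\le 2|X|-4$, i.e.\ $|X|\le 8$, and leaves the cases $(|S|,c)\in\{(0,2),(1,3),(2,4)\}$ to be handled by hand.

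As written, however, your central step is asserted rather than proved. The missing lemma is the one behind ``one face per component'':
(a) Since $H$ is induced, every $Q$-edge inside a face of $H$ has an endpoint outside $X$. So a face of $H$ containing no vertex outside $X$ is a single face of $Q$, a quadrilateral.
(b) The vertices outside $X$ on one $Q$-face are pairwise adjacent through its diagonals. The $Q$-faces inside a face $f$ of $H$ are linked by chains in which consecutive faces share a $Q$-edge not in $H$, hence one with an endpoint outside $X$. So all vertices of $G-X$ inside $f$ lie in one component.
(c) Every edge of $G$ lies in the closure of a single $Q$-face. So each component $K$ lies in exactly one face $f$, and $N_G(K)\subseteq\partial f$.
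Your remark that the neighborhood of $K$ ``contains a cycle of $Q$'' gives none of this, and without (b) the Euler count fails.

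Moreover, faces of $H$ need not be bounded by cycles, and $H$ may be disconnected. So ``enclosed by a separating cycle of length at least $6$'' is not available. What does hold is a bound on the total boundary-walk length $\ell(f)$ of a face containing a component. Since $|N_G(K)|\ge 5$ and closed walks in the bipartite $H$ have even length, $\ell(f)\ge 8-2b(f)$, where $b(f)$ is the number of boundary components of $f$.

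Second, the inequality you state is not the one that does the work. Let $c\ge |X|-4$ be the number of components of $G-X$ and $q$ the number of quadrilateral faces. Sum $\ell(f)$ over all faces, and use $|X|-|E(H)|+F=1+k$ with $F=c+q$ and $\sum_f(b(f)-1)=k-1$, where $k$ is the number of components of $H$. This gives $4c+2q\le 2|X|-4$, i.e.\ $c\le |X|/2-1$, not $c\le |X|-4$.

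The difference matters. The bound $c\le|X|-4$ is compatible with $c\ge|X|-4$ for every $|X|$, so it cannot produce your extremal configuration. It is $c\le|X|/2-1$ that forces $|X|=6$ (so $S=\emptyset$), $c=2$ and $q=0$.

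With this, the equality case is short; you call it the main obstacle and leave it to an unexplained ``tightness'' argument. Since $q=0$, $H$ has exactly two faces, so $|E(H)|-|X|+k=1$ and $H$ contains exactly one cycle, of length $4$ or $6$. A $4$-cycle would have a component on each side. Since diagonals never cross $Q$-edges, it would then be a $4$-cut of $G$, contradicting $5$-connectivity. Hence $H$ is a $6$-cycle $C$ of uncrossed edges with $V(C)=X=V(M)$. Then $G-V(C)$ consists of exactly two odd components, so $C$ is a barrier cycle. No separate argument that ``two boundary $6$-cycles coincide'' is needed.
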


Additionally, Zhang et al. discovered the following result. 

\begin{thm}[J. Zhang, Y. Wu, H. Zhang~\cite{ZHANG2023247}]\label{thm:o1pg-not-3-ext}
    No optimal 1-planar graph is 3-extendable.
\end{thm}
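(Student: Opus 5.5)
We want to show that no optimal 1-planar graph is 3-extendable. The plan is to find, in every optimal 1-planar graph $G$, a matching $M$ of size 3 with no perfect matching through it. We may assume $G$ has even order and at least $8$ vertices; otherwise $G$ is not 3-extendable by definition.

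We use the standard structure of optimal 1-planar graphs. Such a $G$ is obtained from a 3-connected planar quadrangulation $Q$ by inserting both diagonals in every face. Hence $E(G)$ splits into the uncrossed edges of $Q$ and the crossing pairs inside each quadrilateral face. Euler's formula gives $|E(Q)| = 2|V|-4$, so the average degree of $Q$ is below $4$. Therefore $Q$ has a vertex $v$ of degree $3$ in $Q$. Its degree in $G$ is $\deg_Q(v)+\deg_Q(v)=6$, since each incident face adds one diagonal at $v$.

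Let $v$ have $Q$-neighbours $a_1,a_2,a_3$ and faces $v a_i x_i a_{i+1}$, indices mod 3. Then $N_G(v)=\{a_1,a_2,a_3,x_1,x_2,x_3\}$.

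The key step is to choose three independent edges covering all six neighbours of $v$. Once $M$ covers $N_G(v)$, the vertex $v$ has no neighbour left in $G-V(M)$, so $M$ cannot extend to a perfect matching. The natural candidates are $a_1x_1$, $a_2x_2$, $a_3x_3$. Each $a_ix_i$ is a side of the face $v a_i x_i a_{i+1}$, so it is an edge of $Q$. These three edges are pairwise disjoint provided the six vertices $a_1,a_2,a_3,x_1,x_2,x_3$ are distinct. The $a_i$ are distinct. Each $x_i$ differs from every $a_j$: $x_i \neq a_i, a_{i+1}$ as they are vertices of a 4-face, and $x_i=a_{i+2}$ would make $a_{i+2}$ a $Q$-neighbour of $a_i$, creating a triangle in the bipartite graph $Q$.

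The main obstacle is excluding coincidences $x_i=x_j$. Suppose $x_1=x_2=x$. Then $\{a_2,x\}$ is a 2-cut of $Q$ separating $v$'s side, or else $Q$ is very small. The 3-connectivity of $Q$, which is standard for optimal 1-planar graphs, rules this out except when $Q$ is the cube-like small configuration. If all $x_i$ coincide, then $Q=K_{2,3}$ and $|V|=5$, which is odd or too small. If exactly two coincide, $a_2$ would have degree 2 in $Q$, contradicting minimum degree 3. So all six vertices are distinct.

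If degree-2 issues arise because of how the quadrangulation is presented, we would instead use the fact that $G$ is 4-connected with minimum degree 6. We would pick $M$ as a perfect matching of $G[N(v)]$, which contains the 6-cycle $a_1x_1a_2x_2a_3x_3$. This gives the same conclusion: $M$ isolates $v$, so $G$ is not 3-extendable.
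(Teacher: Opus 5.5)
Your argument is correct. The paper gives no proof of this statement; it is quoted from Zhang, Wu and Zhang. Your proof, however, follows the same template the paper uses for its own Theorem~\ref{thm:o2pg-not-5ext}:
\begin{enumerate}
\item take a vertex $v$ whose degree in the uncrossed skeleton is minimum, which forces degree $3$ by Euler's formula and $3$-connectivity;
\item read off $N_G(v)$ from the faces at $v$;
\item exhibit a large matching in $G[N(v)]$.
\end{enumerate}
Here $d_G(v)=6=n+t$ with $n=t=3$, so Lemma~\ref{lma:dean} reduces to your direct observation that a perfect matching of $G[N(v)]$ leaves $v$ isolated.

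Two small clean-ups would help. First, the sentence claiming $\{a_2,x\}$ is a $2$-cut ``except when $Q$ is the cube-like small configuration'' is inaccurate and unnecessary. The correct reason is the one you give next. If $x_i=x_{i+1}$, then in the rotation at $a_{i+1}$ the edge $a_{i+1}x_i$ is both the predecessor and the successor of $a_{i+1}v$. Hence $\deg_Q(a_{i+1})=2$, contradicting minimum degree $3$. Since any two of the three faces at $v$ are consecutive, this covers every possible coincidence. Second, your fallback paragraph adds nothing: $a_1x_1a_2x_2a_3x_3$ is a $6$-cycle only under the same distinctness of the six vertices, so it does not avoid the issue it was meant to bypass.
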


For general 1-planar graphs, Zhang et al.~\cite{ZHANG2023247} used the discharging method to obtain that no 1-planar graph is 5-extendable and showed that this result is best possible by constructing 4-extendable 1-planar graphs, analogous to Plummer's result for planar graphs (Theorem~\ref{thm:planar-ext-bound}). They further obtained similar results for 1-embedded graphs in other surfaces with small genus~\cite{ZHANG2024114172}. In addition, Huang et al.~\cite{huang2024matching} investigated the matching extendability of 7-connected maximal 1-plane graphs.

Motivated by this research, we investigate the matching extendability of \emph{2-planar graphs}, which can be drawn in the plane such that each edge is crossed at most twice. A 2-planar graph $G$ has at most $5|V(G)| - 10$ edges~\cite{Pach1997crossing}. When $G$ has exactly $5|V(G)| - 10$ edges, we call $G$ an \emph{optimal 2-planar graph}.

Similar to 1-planar graphs, edge contraction in 2-planar graphs does not preserve 2-planarity in general (see~\cite{Chen2005}). In this paper, we overcome this difficulty by generalizing the local optimality to optimal 2-planar graphs. By properly reducing the single-crossing ($K_4$) and double-crossing configurations ($K_5$), we preserve the planarity of the bipartite graph $B(G,S)$ for a vertex subset $S$ (see Section~\ref{sec:construct-bgs}), allowing us to apply Tutte's theorem and Euler's formula to establish the following main theorems.

\begin{thm} \label{thm:o2pg-1ext}
    Every 4-connected optimal 2-planar graph of even order is 1-extendable.
\end{thm}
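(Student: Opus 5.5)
We argue by contradiction via Tutte's theorem. Suppose $G$ is a 4-connected optimal 2-planar graph of even order and some edge $e=xy$ lies in no perfect matching. Then $G'=G-x-y$ has no perfect matching. By Tutte's theorem there is $S'\subseteq V(G')$ with $o(G'-S')>|S'|$, and by parity $o(G'-S')\ge |S'|+2$. Put $S=S'\cup\{x,y\}$ and $s=|S|$. Then $G-S$ has at least $s$ odd components. Since $G$ is 4-connected, and $G-S$ has at least two components once $s\ge 4$, we may assume $s\ge 4$. The small cases $s\le 3$ are impossible: $G-S$ would have at least $s\ge 2$ components, so $S$ would be a cut of size at most $3$, contradicting 4-connectivity. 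Also every component of $G-S$ has at least $4$ neighbours in $S$.

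The plan is to get a contradiction by counting edges in a planar bipartite structure. First, as in Section~\ref{sec:construct-bgs}, we form $B(G,S)$. We contract each component of $G-S$ to a single vertex, delete edges inside $S$ and inside components, and keep one edge from each component to each of its neighbours in $S$. The key point is that $B(G,S)$ can be drawn in the plane. For this we use the local structure of optimal 2-planar graphs. We take the known description that an optimal 2-planar drawing arises from a planar pentagulation with a $K_5$ (double-crossing) drawn inside each pentagonal face. Every crossing configuration is therefore a $K_4$ (single crossing) or a $K_5$ (pentagram) inside a face. Within each pentagonal face whose five vertices are split between $S$ and components, we replace the crossed edges by planar routings through the face. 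The component vertices and $S$-vertices on a face boundary interleave around the face, so an edge joining a component to an $S$-vertex can be rerouted to a face-boundary neighbour of the same component without crossings. This is the \emph{local optimality} reduction.

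Next we count. Let $k\ge s$ be the number of component vertices. Then $B(G,S)$ is a simple planar bipartite graph on $s+k$ vertices, with every component vertex of degree at least $4$. Hence
\[
4k \le |E(B)| \le 2(s+k)-4 ,
\]
which gives $2k\le 2s-4$, so $k\le s-2$. This contradicts $k\ge s$. So $e$ lies in a perfect matching. The existence of some perfect matching follows the same way from Tutte's theorem with $S$ arbitrary, using the same count with $k\ge s+2$.

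The main obstacle is the planarity step. We must show that degree information survives the rerouting, meaning each component still has at least $4$ distinct $S$-neighbours in the planar drawing. Otherwise we need a weaker degree bound that still closes the count. If degree $4$ is lost after rerouting, we would fall back on a weighted count. We would use Euler's formula on the pentagulation skeleton together with the edge count $|E(G)|=5|V(G)|-10$. We would then bound the number of $S$-to-component adjacencies realized inside each pentagonal face, case by case on how the five corners of a face split between $S$ and components.
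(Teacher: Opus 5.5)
\textbf{There is a genuine gap in the planarity step.} Your Tutte setup and the final count $4k\le |E(B)|\le 2(s+k)-4$ are exactly the paper's argument. But the whole proof rests on one step you do not establish: that $B(G,S)$ is planar \emph{and} each contracted component keeps all of its at least $4$ neighbours in $S$.

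The rerouting paragraph is not an argument, for three reasons:
\begin{itemize}
\item You assert that component vertices and $S$-vertices ``interleave'' around a face without justification. Your phrase ``split between $S$ and components'' even suggests several components can meet one face, which cannot happen.
\item You do not show that the rerouted curves are simultaneously crossing-free.
\item Rerouting an edge to a different boundary vertex could change or lose adjacencies. You concede this yourself, and the fallback ``weighted count'' via Euler's formula is only announced, never carried out.
\end{itemize}
As written, therefore, the bound $4k\le |E(B)|$, and with it the contradiction $k\le s-2$, is not reached.

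\textbf{The missing idea is a clique observation.} By Lemma~\ref{lma:o2pg-structure}, the five vertices of every face of $\Pi(G)$ induce a $K_5$ in $G$. Vertices of distinct components of $G-S$ are non-adjacent, so one face, and more generally one crossing configuration, can contain vertices of at most one component.

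\textbf{How the paper uses it.} The paper exploits this through local optimality (Lemmas~\ref{lma:locally-optimal-preservation} and~\ref{lma:bipartite-planar}):
\begin{itemize}
\item It contracts non-crossing edges inside each odd component.
\item In each $K_4$/$K_5$ configuration, it deletes the crossing edges with both ends in $S$ that cross an edge inside the component.
\item It iterates until each odd component is a single vertex $y_i$.
\end{itemize}
Since $Y$ is independent and every configuration is a clique, each configuration contains at most one $y_i$. Hence every edge crossing an $S$--$Y$ edge has both ends in $S$, and it disappears when $S$--$S$ edges are deleted. The only edges ever removed lie inside $S$ or are merged parallel copies, so every $S$--component adjacency survives. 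This is precisely the degree preservation you were worried about.

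\textbf{An alternative fix.} The same observation also gives a direct minor argument. Put a new vertex $z_f$ in each face $f$ and join it to the five boundary vertices. For each odd component $C$, contract $V(C)$ together with all $z_f$ for faces $f$ meeting $C$; these sets are connected and pairwise disjoint. The result is a planar minor whose $S$-neighbourhoods coincide with those in $G$, again because face vertices are pairwise adjacent.
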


Furthermore, we show that the connectivity condition in Theorem~\ref{thm:o2pg-1ext} cannot be relaxed by constructing a 3-connected optimal 2-planar graph that is not 1-extendable (see Remark~\ref{rmk:4-conn}).

Next, we investigate the 2-extendability of these graphs and obtain the following result.

\begin{thm} \label{thm:o2pg-2ext}
    Let $G$ be a 4-connected optimal 2-planar graph of even order. Then $G$ is 2-extendable if and only if there is no 4-cycle separating an odd component.
\end{thm}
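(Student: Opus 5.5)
We prove both directions, using the known structure of optimal 2-planar graphs: each is obtained from a 3-connected planar quadrangulation-free pentagulation $P$ (every face a 5-cycle) by adding all five chords inside every pentagonal face, so every face induces a $K_5$ drawn with its chords crossing.

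\emph{Necessity.} Suppose $C=v_1v_2v_3v_4$ is a 4-cycle with $G-V(C)$ having an odd component $H$. Since $|V(G)|$ is even, the remaining part $G-V(C)-V(H)$ also has odd order. We take $M=\{v_1v_2,v_3v_4\}$, or the analogous pair along a diagonal if that is what $G[V(C)]$ supplies. Any perfect matching containing $M$ would have to match $V(H)$ inside $H$, since every neighbour of $H$ outside $H$ lies in $C$ and is already covered. This contradicts $|V(H)|$ being odd, so $M$ is not extendable. One point must be checked: that $G-V(C)$ splits into $H$ and a nonempty rest. By 4-connectivity $C$ is a minimum cut, and "separating" gives the nonempty rest.

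\emph{Sufficiency.} Suppose $M=\{e_1,e_2\}$ does not extend. Let $X=V(M)$ and $G'=G-X$. By Tutte's theorem there is $S'\subseteq V(G')$ with $o(G'-S')\ge |S'|+2$ by parity. Put $S=S'\cup X$ and $s=|S|$, so that $o(G-S)\ge s-2$ and every vertex of $S$ is adjacent to some odd component. Theorem~\ref{thm:o2pg-1ext} and its proof tool, the bipartite planar graph $B(G,S)$ of Section~\ref{sec:construct-bgs}, now apply. Contract each component of $G-S$ to a vertex, delete edges inside $S$, and reduce the $K_4$ and $K_5$ crossing configurations so that the result stays planar. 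Call this graph $B$; one side is $S$ and the other is the set of $c\ge s-2$ components.

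By 4-connectivity each component vertex has degree at least 4 in $B$. Euler's formula for a planar bipartite graph gives $|E(B)|\le 2(s+c)-4$. Combined with $|E(B)|\ge 4c$, this yields $c\le s-2$, so equality holds throughout. Hence every component vertex has degree exactly 4, $B$ is a quadrangulation, and $c=s-2$. The component count also forces every component to be odd.

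The main obstacle is turning this extremal configuration back into a separating 4-cycle in $G$. Each odd component $H$ has exactly four attachment vertices in $S$, and these form the boundary of a face-cluster around $H$. We must show they induce a 4-cycle in $G$ that separates $H$. The first thing to try is the local optimality argument: tightness means no slack was lost in the reduction of crossing configurations. So around each contracted component, the four neighbours in $S$ must be consecutive along pentagonal faces of $P$, and the $K_5$ chords then supply the edges $v_iv_{i+1}$ of the cycle.

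Two cases remain. If the four neighbours form a 4-cycle, it separates the odd component $H$, which contradicts the hypothesis. Otherwise the tightness fails, giving a contradiction with $c=s-2$.

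A further degenerate case needs treatment: equality requires $s\ge 4$, and we must rule out small $s$ where $B$ is not 2-connected. For $s\le 3$, the bound $c\ge s-2$ with degree-4 components contradicts 4-connectivity, which we check directly.
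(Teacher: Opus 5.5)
Your necessity direction and your counting are correct and match the paper. With $S=S'\cup V(M)$ you get $c\ge s-2$, the planar bipartite reduction forces $c\le s-2$, and so every odd component $H$ has exactly four neighbours in $S$. Your separate ``degenerate case $s\le 3$'' is vacuous, since $V(M)\subseteq S$ forces $s\ge 4$; what that actually gives you is $c=s-2\ge 2$, which is what makes $T:=N(H)$ a genuine vertex cut.

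\textbf{The gap.} It lies in the step that carries the whole sufficiency direction: showing that $T$ contains a 4-cycle of $G$. Your mechanism (``tightness means no slack was lost \dots\ otherwise the tightness fails'') cannot deliver this. $B$ is built after deleting every edge with both ends in $S$, so edges inside $T$ have no effect on $|E(B)|$, on degrees in $B$, or on $B$ being a quadrangulation. The faces of $B$ at the vertex of $H$ alternate between $S$ and other contracted components, so they never certify an edge between two vertices of $T$. Likewise, the claim that consecutive attachment vertices lie on a common pentagonal face is asserted, not derived.

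\textbf{The missing idea.} You need a local argument in $G$ itself, which is the paper's Lemma~\ref{lma:4cut-cycle}. Since $G$ is 4-connected and $|T|=4$, $T$ is a minimum cut, so each $t\in T$ has neighbours $x,z$ in two different components of $G-T$. By Lemma~\ref{lma:neighbor-connectivity}(i), consecutive neighbours in the rotation at $t$ are adjacent. Hence each of the two arcs of the rotation between $x$ and $z$ must contain a vertex of $T$; otherwise $x$ and $z$ would be joined by a path in $G-T$. So $t$ has at least two neighbours in $T$. A graph on four vertices with minimum degree at least $2$ is Hamiltonian, so $G[T]$ contains a 4-cycle, and it separates the odd component $H$. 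This contradicts the hypothesis.

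Your ``face-cluster'' picture suggests a possible alternative. You would show that the union of skeleton faces meeting $H$ is bounded by a single cycle of $\Pi(G)$ on exactly $T$, using 4-connectivity to exclude triangles and degenerate boundary walks. As written, though, that argument is not supplied either.
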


Theorem~\ref{thm:o2pg-2ext} yields the following corollary.

\begin{coro} \label{coro:o2pg-5conn-2-ext}
    Every 5-connected optimal 2-planar graph of even order is 2-extendable.
\end{coro}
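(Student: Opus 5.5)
The corollary should follow directly from Theorem~\ref{thm:o2pg-2ext}. Let $G$ be a 5-connected optimal 2-planar graph of even order. Since 5-connectivity implies 4-connectivity, Theorem~\ref{thm:o2pg-2ext} applies. It then suffices to show that $G$ contains no 4-cycle separating an odd component. The size condition in the definition of 2-extendability ($|V(G)|\ge 6$) is automatic: a 5-connected graph has minimum degree at least $5$, hence at least $6$ vertices.

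Suppose, for a contradiction, that $C$ is a 4-cycle in $G$ that separates an odd component. I read ``separates'' in the sense used for barrier cycles. Then $G - V(C)$ is disconnected, with at least one component on each side of $C$. So $V(C)$ is a vertex cut of size $4$. This contradicts $\kappa(G)\ge 5$.

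Hence no such 4-cycle exists, and Theorem~\ref{thm:o2pg-2ext} gives that $G$ is 2-extendable.

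The main point to check is the precise meaning of ``a 4-cycle separating an odd component''. This must be taken from the paper's definition stated before the proof of Theorem~\ref{thm:o2pg-2ext}. What the argument needs is that the condition forces $V(C)$ to be a vertex cut: removing $V(C)$ leaves an odd component together with the nonempty rest of the graph. If the definition is phrased via the planar skeleton, I would show that a separating cycle in that skeleton also separates $G$. The reason is that every edge of $G$ joining the two sides would have to cross $C$. That is impossible when $C$ is formed by edges that are uncrossed or otherwise protected, as in the barrier-cycle notion. I expect this interpretive check to be the only nontrivial step.
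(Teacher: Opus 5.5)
Your proposal is correct and is exactly the argument the paper intends: 5-connectivity gives 4-connectivity, and a 4-cycle $C$ separating an odd component would make $V(C)$ a 4-vertex cut, so Theorem~\ref{thm:o2pg-2ext} applies directly. Your interpretive worry is settled by the paper's preliminaries, which define a separating cycle $C$ as one for which $G-V(C)$ has at least two components, so no argument about the planar skeleton or crossings is needed.
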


Also, we show that optimal 2-planar graphs are not 5-extendable and this result is tight by constructing a 4-extendable optimal 2-planar graph (see Remark~\ref{rmk:o2pg-not-5-ext}).

\begin{thm} \label{thm:o2pg-not-5ext}
    No optimal 2-planar graph is 5-extendable.
\end{thm}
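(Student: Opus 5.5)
Our plan is to mimic the standard argument that no planar graph is 3-extendable (Theorem~\ref{thm:planar-ext-bound}): we will find a matching $M$ of size 5 and a vertex set $S \supseteq V(M)$ whose removal leaves too many odd components for $M$ to extend. Recall the well-known fact that an $n$-extendable graph is $(n+1)$-connected. So if $G$ were 5-extendable, it would be 6-connected, and its minimum degree would be at least 6. We will also use the structure of optimal 2-planar graphs established in the literature: such a $G$ is obtained from a planar pentangulation $P$ (every face a 5-cycle) by inserting all five diagonals into every face. Thus each face of $P$ induces a $K_5$, and every crossed edge is a diagonal of a pentagonal face.

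The key step is to find a vertex $v$ and five independent edges among its neighbours, i.e.\ a matching $M$ of size 5 in $G[N(v)]$. If $M$ extends to a perfect matching $F$, then $v$ must be matched in $F$ to some neighbour $u$. But every neighbour of $v$ that is covered by $M$ is already saturated, so $u$ must be a neighbour of $v$ not covered by $M$. Hence it suffices to find $v$ with $\deg(v)\le 10$ such that $N(v)$ contains a matching of size $\lfloor \deg(v)/2\rfloor$ covering all of $N(v)$, or covering all but vertices that cannot be used. More robustly, we want $v$ of degree exactly 10 with $N(v)$ perfectly matched by a 5-matching.

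We will choose $v$ via Euler's formula on $P$. From $|E(G)|=5|V|-10$, the average degree of $G$ is below 10, so there is a vertex $v$ with $\deg_G(v)\le 9$. Then any 4 independent edges in $N(v)$, plus one further edge disjoint from $N(v)\cup\{v\}$, leave at most one free neighbour, and we need to handle that case.

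A cleaner route is to take $S=N(v)$ with $\deg(v)\le 9$. Then $S$ contains at most 9 vertices, and we need a 5-matching covering all of $S$, which is impossible when $|S|\le 9$. So we instead choose $M$ as 4 or 5 edges covering $N(v)$ (when $|N(v)|$ is even, plus one distant edge $e$; when $|N(v)|=9$, 4 edges inside $N(v)$, one edge from the ninth neighbour $w$ to a vertex outside, and the remaining case handled by $\deg(v)$ even). Then $v$ is isolated in $G-V(M)$, so $M$ does not extend.

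The main obstacle is showing that $G[N(v)]$ has a matching covering $N(v)$ (or all but one vertex, which is then matched outward). Here we will use the local structure: in $P$, the faces around $v$ form a wheel of pentagons. If $v$ has $P$-degree $k$, then $N_G(v)$ consists of $2k$ vertices, namely the two non-$v$ vertices at distance 1 and 2 along each face, traversing a closed walk. Consecutive vertices in this cyclic order are adjacent in $G$, since each consecutive pair lies in a common pentagonal face, which induces a $K_5$. So $G[N(v)]$ contains a Hamiltonian cycle of length $2k=\deg(v)$ (when the walk is a cycle, i.e.\ in the simple case), and hence contains a perfect matching. Since minimum degree at least 6 forces $k\ge 3$, and Euler's formula on the pentangulation ($|E(P)|=\tfrac53(|V(P)|-2)$) gives a vertex with $k=3$, so $\deg(v)=6$, we can take 3 edges covering $N(v)$ plus 2 further independent edges elsewhere, available because $|V(G)|\ge 12$. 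This $M$ isolates $v$, so $G$ is not 5-extendable. We will need to check degeneracies where the neighbour walk repeats vertices; 6-connectivity should rule these out.
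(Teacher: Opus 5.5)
\textbf{Verdict: genuine gap, but it is repairable.} Your mechanism is the same as the paper's: exhibit a $5$-matching $M$ with $N(v)\subseteq V(M)$ and $v\notin V(M)$ for a low-degree vertex $v$. The paper does this by applying Dean's lemma to a vertex of degree $9=5+4$ and exhibiting a $4$-matching in $G[N(v)]$.

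The problem is in your final paragraph, where you commit to a construction: it rests on a miscount. If $v$ has skeleton degree $k$, its neighbours are the $k$ skeleton neighbours (shared by consecutive faces) plus two more from each of the $k$ pentagonal faces at $v$, namely the face's two vertices at distance $2$ from $v$, reached by crossing edges. So $|N_G(v)|=3k$ (Lemma~\ref{lma:edge-pattern-o2pg}), not $2k$. This has three consequences:
\begin{itemize}
\item The vertex of skeleton degree $3$ that you get from Euler's formula has degree $9$, not $6$.
\item The cycle through consecutive neighbours has odd length $9$, so it has no perfect matching.
\item Your three edges inside $N(v)$ leave three neighbours of $v$ uncovered, so $v$ is not isolated and the argument collapses.
\end{itemize}
The same miscount undermines ``$\delta\ge 6$ forces $k\ge 3$'': with the correct count, $\delta(G)\ge 6$ only gives $k\ge 2$.

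Both defects can be fixed, essentially along the line you sketched and then abandoned in your ``cleaner route'' paragraph.
\begin{itemize}
\item \emph{The case $k=3$.} Let $C$ be the $9$-cycle on $N(v)$ given by Lemma~\ref{lma:neighbor-connectivity}(i). Since $G$ is connected and $|V(G)|\ge 12>|N[v]|$, some $w\in N(v)$ has a neighbour $x\notin N[v]$. A perfect matching of the path $C-w$ (four edges) together with $wx$ is a $5$-matching that covers $N(v)$ and misses $v$, so it is not extendable.
\item \emph{Getting $k\ge 3$.} Use the $3$-connectivity of $\Pi(G)$ (Lemma~\ref{lma:skeleton-3-conn}), as the paper does. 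Alternatively, if $k=2$ then $N(v)$ spans a $6$-cycle, and three of its edges isolate $v$. That contradicts $3$-extendability, which follows from $5$-extendability.
\end{itemize}
Finally, repeated vertices in the neighbour walk are not a real concern. The $3k$ edges at $v$ are distinct and $G$ is simple, so they go to $3k$ distinct vertices; no connectivity argument is needed there.
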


Aldred and Plummer~\cite{aldred2004edge,ALDRED20102618,aldred2011proximity} introduced distance constraints on matchings extendable to a perfect matching. A connected graph $G$ with at least $2m+2$ vertices is said to be \emph{distance $d$ $m$-extendable} if $G$ contains a perfect matching and any matching $M$ of size $m$, in which any distinct edges are of distance at least $d$, is extendable. For planar triangulations, Aldred and Plummer~\cite{aldred2011proximity} established the following results.

\begin{thm}[Aldred and Plummer~\cite{aldred2011proximity}] \label{thm:aldred-planar}
    If $G$ is a 5-connected even planar triangulation, then
    \begin{itemize}
        \item[(a)] if $m \ge 0$ and $G$ has at least $2m+2$ vertices, then $G$ is distance 5 $m$-extendable and
        \item[(b)] if $0 \le m \le 7$ and $G$ has at least $2m+2$ vertices, then $G$ is distance 4 $m$-extendable.
    \end{itemize}
\end{thm}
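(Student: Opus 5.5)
We argue by contradiction through Tutte's theorem and Euler's formula, using the distance condition to keep the edges of $M$ from interacting. Let $G$ be a 5-connected even planar triangulation and $M$ a matching of size $m$ whose edges are pairwise at distance at least $d$ (with $d=5$ for (a), $d=4$ for (b)). Suppose $M$ does not extend. Then $G-V(M)$ has no perfect matching. Tutte's theorem gives $S'\subseteq V(G)-V(M)$ with $o(G-V(M)-S')>|S'|$. Put $S=S'\cup V(M)$. Since $|V(G)|$ is even, parity gives
\[
o(G-S)\ge |S|-2m+2 .
\]

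Next we pass to a planar bipartite graph. Contract each odd component of $G-S$ to a single vertex, delete the even components and all edges inside $S$, and merge parallel edges. The result is a simple bipartite plane graph $B$ with parts $S$ and the $c=o(G-S)$ component vertices.

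Because $G$ is 5-connected, each component vertex has degree at least $5$ in $B$. Euler's formula for bipartite plane graphs gives $|E(B)|\le 2(|S|+c)-4$. Hence $5c\le 2|S|+2c-4$, that is, $c\le (2|S|-4)/3$. Combining this with $c\ge |S|-2m+2$ yields $|S|\le 6m-10$. This alone is not a contradiction, so the $2m$ vertices of $V(M)\subseteq S$ must be shown to cost extra.

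The refinement is a local count around each edge $xy\in M$.
\begin{itemize}
\item Since $G$ is a triangulation, $xy$ lies in two facial triangles $xyz$ and $xyz'$.
\item The link cycles of $x$ and $y$ show that, around the pair $\{x,y\}$, the faces of $B$ incident to $x$ or $y$ are either few in number or of length at least $6$ rather than $4$. This is because consecutive neighbours of $x$ and $y$ in $G$ are joined by triangle edges. Those edges either lie in $S$, producing long faces of $B$, or place several neighbours inside one component, which lowers the degree of $x$ or $y$ in $B$.
\end{itemize}
I would make this precise by discharging. Give each face $f$ of $B$ charge $|f|-4\ge 0$, give each vertex $v$ charge $\deg_B(v)-4$, and note that the total charge is $-8$. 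Component vertices start with charge at least $1$. Vertices of $S'$ can be handled as in the proof of Theorem~\ref{thm:planar-2ext}-type arguments. The goal is to show that each matched edge $xy$, together with its neighbourhood, absorbs enough charge that the net deficit attributable to $V(M)$ is at most what the inequality $c\ge|S|-2m+2$ can tolerate, which forces the total charge to be nonnegative, a contradiction.

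The distance condition $d\ge5$ is what makes these local neighbourhoods (the vertices within distance $2$ of each matched edge, and the components and faces of $B$ touching them) pairwise disjoint. Without it, the charge could be counted twice.

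For (b), with $d=4$, the neighbourhoods may share a component or a face of $B$. I would bound the loss caused by each such overlap by a constant. The surplus coming from the $+2$ in $c\ge|S|-2m+2$, together with the total charge $-8$, then absorbs these losses provided $m\le 7$.

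I expect the main obstacle to be the local analysis for a single edge $xy\in M$. It must show, uniformly over how the neighbours of $x$ and $y$ are split between $S'$ and the components, that the region around $xy$ contributes enough positive charge. It must also pin down the exact constant that makes $m\le7$ the threshold in (b). I would first try to settle this by a case analysis on the two faces $xyz$ and $xyz'$ and on whether $z,z'$ lie in $S$.
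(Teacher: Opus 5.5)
The paper gives no proof of this theorem; it is quoted from Aldred and Plummer. So your proposal has to stand on its own, and as written it is not yet a proof. Your opening is correct: the Tutte set $S'$ of $G-V(M)$, the parity step, the plane bipartite graph $B$, Euler's bound, and $|S|\le 6m-10$. But that is only the setup. The step that carries the theorem is deferred to a discharging argument whose rules are never given, and the framework you describe cannot work as stated. The charges $\deg_B(v)-4$ and $|f|-4$ sum to a constant fixed by Euler's formula for every plane bipartite graph, so no redistribution can make the total nonnegative.

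The Tutte inequality and $\deg_B(y)\ge 5$ must be built into the accounting. Doing this (say for connected $B$, with $c\ge |S'|+2$ odd components) gives
\[ |S'|+\sum_{y}\bigl(\deg_B(y)-5\bigr)+\frac12\sum_{f}\bigl(|f|-4\bigr)\le 4m-10 . \]
What you must show is that each edge of $M$ can be credited with about $4$ units of the left-hand side, without double counting. That per-edge statement is the entire content of (a). For (b), the constant that produces the threshold $m\le 7$ is never derived.

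Two points show this missing step is not routine.

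\emph{First, no local lemma of your kind holds for every edge of $M$.} Suppose all neighbours of $x$ and $y$, other than each other, lie in $S'$ and in one even component of $G-S$. Then $x$ and $y$ are isolated vertices of $B$, each carrying charge $-4$ in your scheme, with nothing attached to them that could pay for this. Yet $S'$ is still a Tutte set for $M-xy$. You first need a reduction such as the one the paper uses for Theorem~\ref{thm:o2pg-dist3-mext}: take $m$ minimal. This gives $c=|S'|+2$ and that every edge of $M$ has neighbours in two distinct odd components (Claims~\ref{clm:tutte-set-equality} and~\ref{clm:ei-neighbors}).

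\emph{Second, the disjointness you rely on fails where it matters.} Even after the reduction, the link-cycle argument of Claim~\ref{clm:ei-s-neighbors} yields only two private vertices of $S'$ per edge, i.e.\ $|S'|\ge 2m$. This does not contradict $|S'|\le 4m-10$ (your $|S|\le 6m-10$) once $m\ge 5$. So roughly half of the credit must come from the component-degree and face-length terms. Distance at least $5$ does make the distance-$2$ balls around the edges of $M$ disjoint in $G$. But each odd component is a single vertex of $B$ that may be adjacent to the neighbourhoods of arbitrarily many edges of $M$, and a face of $B$ may run past several of them. The rules for splitting that charge among the edges of $M$ are exactly where distance $5$ versus $4$, and the bound $m\le 7$, would enter. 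The proposal supplies none of them.
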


\noindent Aldred and Plummer~\cite{aldred2011proximity}, and Aldred and Fujisawa~\cite{aldred2014distance}, respectively, generalized this result to triangulations embedded on the projective plane, the torus and the Klein bottle. For optimal 1-planar graphs $G$ of even order, Theorems~\ref{thm:1-planar-2ext} and~\ref{thm:1-planar-3ext} imply that $G$ is distance 2 2-extendable, and distance 2 3-extendable if $G$ is 5-connected.

In this article we establish the following theorem for optimal 2-planar graphs.

\begin{thm} \label{thm:o2pg-dist3-mext}
    If $G$ is a 6-connected optimal 2-planar graph of even order with at least $2m+2$ vertices, then $G$ is distance 3 $m$-extendable for $m \ge 0$.
\end{thm}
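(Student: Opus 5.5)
We argue by contradiction, following the Tutte-plus-Euler scheme announced for Theorems~\ref{thm:o2pg-1ext} and~\ref{thm:o2pg-2ext}. Let $M$ be a matching of size $m$ whose edges are pairwise at distance at least $3$, and suppose $M$ does not extend to a perfect matching. Then $G-V(M)$ has no perfect matching. By Tutte's theorem there is a set $S_0\subseteq V(G)\setminus V(M)$ such that $G-V(M)-S_0$ has at least $|S_0|+2$ odd components; parity gives $|S_0|+2$ rather than $|S_0|+1$. Put $S=S_0\cup V(M)$, so $|S|=|S_0|+2m$, and let $c_o$ be the number of odd components of $G-S$. Then $c_o\ge |S|-2m+2$. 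For $m=0$ the statement is just the existence of a perfect matching, which already follows from Theorem~\ref{thm:o2pg-1ext}, so we may assume $m\ge1$. We also take $S_0$ maximal, so that every component of $G-S$ is odd and all of them are counted.

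Next we build the planar bipartite graph $B(G,S)$ of Section~\ref{sec:construct-bgs}. Contract each component of $G-S$ to a single vertex, and reduce the $K_4$ (single-crossing) and $K_5$ (double-crossing) configurations by local optimality, so that the result is a simple planar bipartite graph. Its two sides are $S$ and the component-vertices, and every face has length at least $4$. Euler's formula then gives $|E(B)|\le 2(|S|+c_o)-4$. Six-connectivity of $G$ gives each component-vertex degree at least $6$ in $G$. Assuming the reduction preserves this, possibly up to a controlled loss that local optimality lets us quantify, we get $|E(B)|\ge 6c_o$. Combining the two bounds yields $4c_o\le 2|S|-4$, that is, $c_o\le |S|/2-2$. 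Together with $c_o\ge |S|-2m+2$ this forces $|S|\le 4m-8$. This alone is not yet a contradiction, so the distance hypothesis must be used.

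The distance-$3$ condition is what makes the count work. Two edges of $M$ at distance at least $3$ have no common neighbour and no edge between their endpoints. The key local claim is that each edge $xy\in M$ has its two endpoints adjacent to component-vertices in a way that costs $S$ extra faces or degree. Since $G$ is optimal 2-planar, $x$ and $y$ lie in a common $K_5$ or $K_4$ configuration. The common neighbours of $x$ and $y$ (at least three of them, by the structure of the underlying pentagonal planarisation) either lie in $S_0$ or belong to components. Distance $3$ guarantees that these neighbourhoods are disjoint across different edges of $M$. We therefore plan a charging argument: each edge of $M$ either forces at least two private vertices of $S_0$, or forces a face of $B$ of length at least $6$ near $xy$, or forces two component-vertices of degree strictly larger than $6$. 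Any of these improves the Euler bound by a constant per edge of $M$, enough to offset the $2m$ term. Concretely, we aim to show $|E(B)|\le 2(|S|+c_o)-4-2m$ or $|E(B)|\ge 6c_o+2m$. This yields $c_o\le |S|/2-2-m/2$, hence $|S|-2m+2\le |S|/2-2-m/2$, i.e.\ $|S|\le 3m-8$. Since $|S|\ge 2m$, we additionally need $|S_0|$ large, and the refined count should push the bound below $2m$, which gives the contradiction.

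The main obstacle is this charging step. We must check that the reductions of $K_4$ and $K_5$ configurations in building $B(G,S)$ do not destroy the degree lower bound of $6$, and that the local gain attributed to each $M$-edge is genuinely disjoint between edges at distance at least $3$. We would first try to prove a local lemma: in a 6-connected optimal 2-planar graph, no vertex $x\in S$ whose matched partner $y$ is also in $S$ can have all its $B$-faces of length $4$. We would then sum these gains, using the disjointness of closed neighbourhoods of distinct $M$-edges.
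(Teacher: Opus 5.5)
Your set-up matches the paper's. With $S=S_0\cup V(M)$, planarity of $B(G,S)$ (Lemma~\ref{lma:bipartite-planar}), degree at least $6$ for every component vertex, and Euler's bound, you get an upper bound on $|S_0|$. Two small corrections. First, no ``controlled loss'' occurs, since the degree of a component vertex in $B(G,S)$ is by definition its number of neighbours in $S$. Second, $4c_o\le 2|S|-4$ gives $c_o\le |S|/2-1$, hence $|S|\le 4m-6$, i.e.\ $|S_0|\le 2m-6$.

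The genuine gap is everything after that point: it is a plan rather than an argument, and the plan as written would not close. Even granting your targets $|E(B)|\le 2(|S|+c_o)-4-2m$ or $|E(B)|\ge 6c_o+2m$, you only get $|S_0|\le m-6$, which is consistent with $|S|\ge 2m$. To reach a contradiction from Euler slack alone you would need slack exceeding $4m-12-2|S_0|$. Your local lemma about $B$-faces of length $4$ is neither proved nor quantified. What is actually needed is a lower bound on $|S_0|$. Once you have $|S_0|\ge 2m$, the basic estimate $|S_0|\le 2m-6$ already finishes, with no extra slack required.

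This is how the paper concludes. It shows $|N(e_i)\cap S_0|\ge 2$ for every $e_i=u_iv_i\in M$, and these sets are pairwise disjoint because $d(e_i,e_j)\ge 3$ (the part you did identify). The proof of $|N(e_i)\cap S_0|\ge 2$ rests on three ingredients missing from your proposal.

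\emph{Minimal counterexample.} Take $m$ minimal (the cases $m\le 2$ follow from Corollary~\ref{coro:o2pg-5conn-2-ext}), so $M\setminus\{e_i\}$ extends. If $e_i$ had neighbours in at most one odd component of $G-S$, putting $u_i,v_i$ back would not change the number of odd components. Then $S_0$ would still be a Tutte set of $G-V(M\setminus\{e_i\})$, a contradiction. Hence every edge of $M$ sees at least two odd components. Without this, an $M$-edge could lie entirely in the neighbourhood of one component and contribute nothing to any count.

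\emph{Rotation adjacency.} By Lemma~\ref{lma:neighbor-connectivity}(i), consecutive neighbours in the rotation at $u_i$ are adjacent. So neighbours of $u_i$ in distinct components of $G-S$ must be separated by vertices of $S_0\cup\{v_i\}$; the distance condition leaves $v_i$ as the only neighbour of $u_i$ in $V(M)$. Together with the previous point, this excludes $N(e_i)\cap S_0=\emptyset$.

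\emph{Separating triangle.} If $N(e_i)\cap S_0=\{s\}$, the same separation argument forces $s$ to be adjacent to both $u_i$ and $v_i$. It also forces the two components to be separated around $u_i$, and around $v_i$, precisely by the other two vertices of $\{u_i,v_i,s\}$. Lemma~\ref{lma:neighbor-connectivity}(ii) then makes all three edges of this triangle non-crossing. So it is a separating triangle of the pentangulation $\Pi(G)$, hence a $3$-cut of $G$, contradicting $6$-connectivity.

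With these ingredients, $2m\le |S_0|\le 2m-6$ gives the contradiction.
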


The remainder of this paper is organized as follows. In Section~\ref{sec:preliminaries}, we present preliminary definitions and results. In Section~\ref{sec:construct-bgs}, we introduce locally optimal 2-planar graphs and describe the operations to construct the planar bipartite reduction $B(G,S)$. The proofs of Theorems~\ref{thm:o2pg-1ext} and~\ref{thm:o2pg-2ext}, and~\ref{thm:o2pg-not-5ext} are given in Sections~\ref{sec:proofs-1-2-ext} and~\ref{sec:proof-not-5-ext} respectively. Finally, Section~\ref{sec:proof-distance-ext} is devoted to the proof of the distance-restricted extendability (Theorem~\ref{thm:o2pg-dist3-mext}).

\section{Preliminaries}\label{sec:preliminaries}

In this section, we introduce some basic terminology, notations and related results that will be used throughout the paper. All graphs considered in this paper are finite, simple, and undirected. The \emph{order} of a graph $G$ is the number of its vertices, denoted by $|V(G)|$.

For a vertex $v \in V(G)$, the neighborhood of $v$, denoted by $N_G(v)$, is the set of all vertices adjacent to $v$ in $G$. The degree of $v$, denoted by $d_G(v)$, is the number of edges incident to $v$ in $G$, and the minimum degree of $G$ is denoted by $\delta(G)$. For a subset $U \subseteq V(G)$, the \emph{induced subgraph} $G[U]$ is the graph whose vertex set is $U$ and whose edge set consists of all edges of $G$ that have both endvertices in $U$. For a subset $S \subseteq V(G)$, the graph $G-S$ is the subgraph induced by $V(G) \setminus S$, and we denote by $o(G-S)$ the number of odd components (i.e., components of odd order) in $G-S$. A subset $S \subset V(G)$ is a \emph{vertex cut} if $G-S$ is disconnected. A graph $G$ is \emph{$k$-connected} if $|V(G)| > k$ and $G$ has no vertex cut of size less than $k$. A cycle $C$ in a connected graph $G$ is called a \emph{separating cycle} if $G - V(C)$ contains at least two components. A cycle of length $k$ is referred to as a \emph{$k$-cycle}.

The length of a shortest path between two vertices $u, v \in V(G)$ is called the distance between $u$ and $v$, denoted by $d(u,v)$. We extend this distance definition to edges and vertex sets. For an edge $e = uv$ and a vertex $x$, the distance between them is defined as $d(e, x) = \min \{d(u,x), d(v,x)\}$. For any two edges $e_1, e_2 \in E(G)$, the distance between them, denoted by $d(e_1, e_2)$, is defined as $\min \{d(u,v) \mid u \in V(e_1), v \in V(e_2)\}$.

For an edge $e = xy \in E(G)$, the \emph{edge contraction} of $e$, denoted by $G/xy$, is the operation that removes the edge $xy$, merges the vertices $x$ and $y$ into a single new vertex $v_{xy}$, and connects $v_{xy}$ to every vertex in $(N(x) \cup N(y)) \setminus \{x, y\}$. Finally, any resulting multiple edges are deleted.

Two edges are \emph{independent} if they share no common endvertices. A \emph{matching} $M$ in a graph $G$ is a set of pairwise independent edges. The set of all endvertices of the edges in $M$ is denoted by $V(M)$. A matching $M$ is \emph{perfect} if it covers all vertices of $G$ (i.e., $V(M) = V(G)$). A matching $M$ is \emph{extendable} if $G$ admits a perfect matching containing $M$. The following classical theorem due to Tutte gives a characterization for a graph to have a perfect matching.

\begin{thm}[Tutte's Theorem~\cite{Tutte1947}] \label{thm:tutte}
    A graph $G$ has a perfect matching if and only if $o(G-S) \le |S|$ for every subset $S \subseteq V(G)$.
\end{thm}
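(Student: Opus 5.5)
We prove Tutte's Theorem~\ref{thm:tutte} in the standard two directions. The necessity is a counting argument. Suppose $G$ has a perfect matching $F$ and let $S \subseteq V(G)$. Each odd component $D$ of $G-S$ cannot be perfectly matched within itself, so at least one vertex of $D$ is matched by $F$ to a vertex outside $D$. Since $D$ is a component of $G-S$, that partner lies in $S$. Distinct odd components use distinct vertices of $S$ because $F$ is a matching. Hence $o(G-S) \le |S|$.

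For sufficiency I will follow Lovász's edge-maximal counterexample argument. Assume $G$ satisfies Tutte's condition but has no perfect matching. Taking $S=\emptyset$ shows $|V(G)|$ is even. Adding an edge can only merge components of $G-S$, so it cannot increase $o(G-S)$. Hence the condition is preserved under edge additions, and we may assume $G$ is edge-maximal without a perfect matching, i.e.\ $G+e$ has one for every non-edge $e$. Let $U$ be the set of vertices adjacent to all other vertices.

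The key step is to show that every component of $G-U$ is complete.

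\emph{Finishing, assuming the key step.} Each even component of $G-U$ is complete, so it is perfectly matched internally. Each odd component is complete, so it is matched internally except for one vertex, which is matched to a distinct vertex of $U$; this uses $o(G-U) \le |U|$. The remaining vertices of $U$ are even in number by parity, since $|V(G)|$ is even. They are pairwise adjacent, so they can be matched among themselves. This gives a perfect matching of $G$, a contradiction.

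\emph{Proving the key step.} Suppose some component of $G-U$ is not complete. Then it contains vertices $x, y, z$ with $xy, yz \in E(G)$ and $xz \notin E(G)$. Since $y \notin U$, there is a vertex $w$ with $yw \notin E(G)$. By maximality, $G+xz$ has a perfect matching $M_1$, which must contain $xz$. Likewise $G+yw$ has a perfect matching $M_2$, which must contain $yw$. Consider the symmetric difference $M_1 \triangle M_2$, a disjoint union of even alternating cycles, and let $C$ be the cycle containing $xz$.
\begin{itemize}
\item If $C$ does not contain $yw$, replace $M_1$ by $M_1 \triangle E(C)$. This is a perfect matching of $G+xz$ that avoids $xz$, hence a perfect matching of $G$.
\item If $C$ contains both $xz$ and $yw$, walk along $C$ from $y$ in the direction away from $w$ until first reaching $x$ or $z$, say $x$. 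Let $P$ be this $y$--$x$ path. Combine $P \cup \{xy\}$ into an even alternating cycle with respect to $M_1$ that avoids $xz$. Switching $M_1$ along it gives a perfect matching of $G$.
\end{itemize}
Either way we contradict the assumption on $G$.

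The main obstacle is this last case: checking that the cycle $P + xy$ alternates correctly and avoids both $xz$ and $yw$. That is where I will be most careful with the parity of positions along $C$.
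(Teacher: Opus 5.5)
The paper does not prove this statement; it quotes Tutte's theorem and uses it as a black box. Your outline is the standard Lov\'asz argument, and most of it is fine: necessity by counting, the reduction to an edge-maximal graph, the set $U$, the finishing step, and the case where $C$ misses $yw$. The last case, however, fails as written, for two reasons.

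\textbf{Parity.} Walking from $y$ away from $w$, the path $P$ leaves $y$ on $y$'s $M_1$-edge. It must enter $x$ on $x$'s $M_2$-edge, because $x$'s $M_1$-edge is $xz$ and $z$ has not yet been visited. So $P$ alternates $M_1,M_2,\dots,M_2$ and has even length, which makes $P+xy$ an \emph{odd} cycle. An odd cycle cannot alternate with respect to any matching. A concrete instance: let $C=y\,w\,a\,z\,x\,b\,y$ with $yw,az,xb\in M_2$ and $wa,zx,by\in M_1$. Then $P=y\,b\,x$ and $P+xy$ is a triangle.

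\textbf{Wrong edge removed.} Even if you had an $M_1$-alternating cycle avoiding $xz$, switching $M_1$ along it would leave $xz$ in the matching. The result would therefore not be a perfect matching of $G$. The cycle you switch along must contain the one non-edge of the matching being switched.

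\textbf{First fix (switch $M_1$).} Keep your $P$. It never reaches $w$, because $x$ and $z$ lie on $C$ strictly before $w$ in that direction, so $P$ avoids $yw$. Close it with $xz$ and $zy$ instead of $xy$. The cycle $P+xz+zy$ is even and does not meet $z$ prematurely. It alternates with respect to $M_1$:
\begin{itemize}
\item at $x$: an $M_2$-edge, then $xz$;
\item at $z$: $xz$, then $zy\notin M_1$;
\item at $y$: $zy$, then the $M_1$-edge.
\end{itemize}
It contains $xz$, and its non-$M_1$ edges are $zy\in E(G)$ together with $M_2$-edges other than $yw$. Switching $M_1$ along this cycle gives a perfect matching of $G$.

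\textbf{Second fix (switch $M_2$, as in Diestel).} Walk from $y$ \emph{through} $yw$ until you first reach $x$ or $z$, say $x$. This path $Q$ starts and ends with $M_2$-edges, so $Q+xy$ is an even $M_2$-alternating cycle that contains $yw$ and avoids $xz$. Switching $M_2$ along it gives a perfect matching of $G$.

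With either repair the proof is complete.
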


If a graph $G$ has no perfect matching, then by Theorem~\ref{thm:tutte}, there exists a subset $S \subseteq V(G)$ such that $o(G-S) > |S|$. Such a subset $S$ is called a \emph{Tutte set} of $G$.

A planar drawing of a graph divides the plane into some regions called \emph{faces}. A graph is \emph{2-planar} if it can be drawn in the plane such that each edge is crossed by at most two other edges. In such a drawing, two edges form a \emph{crossing pair} if they intersect at their interior point, and such edges are referred to as \emph{crossing edges}. An edge is called a \emph{non-crossing edge} if it does not cross any other edge.

It is known~\cite{Pach1997crossing} that a 2-planar graph $G$ has at most $5|V(G)| - 10$ edges. We call $G$ an \emph{optimal 2-planar graph} if $G$ has exactly $5|V(G)| - 10$ edges. The edge density implies that the average degree of $G$ is strictly less than 10 and $\delta(G) \le 9$.

For convenience, we assume a given 2-planar graph $G$ is already drawn in the plane, and we use $G$ to denote both the graph and its drawing. The \emph{true-planar skeleton}, denoted by $\Pi(G)$, is defined as the planar spanning subgraph obtained by removing all crossing edges from $G$.

\begin{lma}[Bekos et al.~\cite{bekos2017optimal}] \label{lma:o2pg-structure}
    Let $G$ be an optimal 2-planar graph. Then $G$ admits a drawing such that its true-planar skeleton $\Pi(G)$ spans all vertices of $G$, contains only faces of length 5, and each face of $\Pi(G)$ has exactly 5 crossing edges in its interior.
\end{lma}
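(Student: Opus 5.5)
The plan is a refined Pach--Tóth counting argument: show that in a suitably chosen drawing every face of the skeleton contributes at most its ``fair share'' of edges, with equality only for pentagons carrying a pentagram. Start from a 2-planar drawing of the optimal graph $G$ that minimizes the number of crossings. Among such drawings, choose one maximizing the number of non-crossing edges. Let $H=\Pi(G)$ be the true-planar skeleton. Every crossing edge of $G$ lies in the interior of exactly one face $f$ of $H$; write $c(f)$ for the number of crossing edges inside $f$. Then
\[
|E(G)| = |E(H)| + \sum_{f} c(f).
\]
Assume first that $H$ is connected and spans $V(G)$. For a face $f$, let $|f|$ be its boundary walk length, so that $\sum_f |f| = 2|E(H)|$. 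Euler's formula gives $|E(H)| = n + F - 2$ and hence $\sum_f (|f|-2) = 2n-4$. Substituting,
\[
|E(G)| = n-2 + \sum_f \bigl(1+c(f)\bigr), \qquad 5n-10 = n-2 + \sum_f 2\bigl(|f|-2\bigr).
\]
So optimality forces $\sum_f \bigl(2(|f|-2) - 1 - c(f)\bigr) = 0$. The whole proof therefore reduces to a local inequality
\[
c(f) \le 2|f| - 5 \quad \text{for every face } f,
\]
with equality if and only if $|f|=5$ and the five crossing edges inside $f$ form a pentagram.

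To prove the local inequality, I would split on $|f|$. For $|f|=3$ we have $c(f)=0$. For $|f|=4$, at most the two diagonals can lie inside, so $c(f)\le 2<3$. For $|f|=5$, at most the five chords fit, and five chords inside a pentagon occur only as the pentagram, each chord crossed exactly twice; this gives $c(f)\le 5$ with equality exactly in the stated configuration. For $|f|\ge 6$, I would count the crossing points inside $f$ in two ways. Each crossing edge inside $f$ is crossed at most twice. Crossing-minimality, together with maximality of the non-crossing edges, lets us reroute any chord that could be drawn along the boundary without crossings; this shows every chord inside $f$ is crossed at least once. A chord crossed exactly once can then be paired with its partner to form a $K_4$-type configuration. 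Treating the drawing inside $f$ as an outerplanar-like arrangement, induction on $|f|$ (cutting $f$ along an uncrossed or once-crossed configuration) should give $c(f)\le 2|f|-6<2|f|-5$.

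The preliminary step of forcing $H$ to be connected and spanning also uses the extremal choice of drawing. If some vertex were not on $H$, or $H$ were disconnected, a face of $H$ would have a non-simple or disconnected boundary. The same Euler count then carries an extra positive term per additional component, which makes $|E(G)|<5n-10$, a contradiction. Equality throughout then yields every face of length $5$ with exactly five crossing edges inside, which is the statement.

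I expect the main obstacle to be the bound $c(f)\le 2|f|-6$ for large or degenerate faces, whose boundary walks may repeat vertices. The issue is justifying that a crossing edge never runs ``uselessly'' inside a face, since otherwise $c(f)$ could exceed the bound. I would first try to rule out such edges by the rerouting and crossing-minimality argument. Only afterwards would I run the induction on face size, counting crossings per chord.
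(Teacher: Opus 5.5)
The paper gives no proof of this lemma; it is quoted from Bekos, Kaufmann and Raftopoulou. So your argument has to stand on its own, and it has a genuine gap exactly where the lemma has content: showing that the uncrossed edges form a connected spanning plane graph.

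\textbf{The degenerate case.} Your one-line treatment of this case does not work. With $c$ components, Euler gives
\[
5n-10-|E(G)|=\sum_f\bigl(2|f|-5-c(f)\bigr)+5(c-1).
\]
The slack $5(c-1)$ helps only if you control $c(f)$ for faces with disconnected boundary, and there your inequality $c(f)\le 2|f|-5$ is false. Vertices of $H$-degree $0$ add nothing to $|f|$ but can carry many crossing edges.

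A single such vertex already breaks your equality characterization. In a pentagonal face $abcde$, put $v$ near $a$ and draw $be,bd,ce,vc,vd$ inside. Each of these edges is crossed exactly twice, so $c(f)=5=2|f|-5$, yet there is no pentagram.

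The correct per-face statement is $c(f)\le 2|f|-5+5(k_f-1)$, strict unless $f$ is a pentagon carrying a pentagram, where $k_f$ is the number of boundary components. But for a drawing in which every edge is crossed ($H$ edgeless, one face, $k_f=n$), this inequality is literally $|E(G)|<5n-10$. That is the original problem, not a local fact. Ruling such faces out in a suitable drawing of an optimal graph is precisely what must be proved.

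Counting alone cannot do it. You need two further tools:
\begin{itemize}
\item edge-maximality of $G$: two non-adjacent vertices on a common cell of the planarization would let you add an uncrossed edge, exceeding $5n-10$;
\item crossing-minimal rerouting: a crossed edge whose endpoints share a cell can be redrawn uncrossed.
\end{itemize}
You set up the extremal drawing but never actually use it for this.

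\textbf{Faces with connected boundary.} Even here, the bound $c(f)\le 2|f|-6$ for $|f|\ge 6$ is only asserted, and your induction cannot start.
\begin{itemize}
\item A face of $H$ contains no uncrossed chord to cut along. Every edge inside $f$ is crossed by definition, which also makes your rerouting step (``every chord inside $f$ is crossed at least once'') vacuous.
\item Cutting along a crossing pair does not produce faces of a skeleton to which an induction hypothesis applies.
\item The claim that a once-crossed chord pairs with its partner into a $K_4$ configuration is false. In a hexagon with vertices $0,1,\dots,5$ in order, take chords $02,13,24$: $02$ is crossed only by $13$, while $13$ is also crossed by $24$.
\end{itemize}
What is needed is an actual extremal bound on the chords of a closed walk of length $s$, each crossed once or twice. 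This also requires showing that the drawing can be taken simple (adjacent edges do not cross, independent edges cross at most once), so that crossings inside a disk face are governed by interleaving of endpoints.
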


A plane graph in which every face is bounded by a cycle of length 5 is called a \emph{pentangulation}. By Lemma~\ref{lma:o2pg-structure}, the true-planar skeleton $\Pi(G)$ is clearly a pentangulation. Furthermore, Bekos et al.~\cite{BEKOS20191038} established the following connectivity result for this skeleton.

\begin{lma}[Bekos et al.~\cite{BEKOS20191038}] \label{lma:skeleton-3-conn}
    The true-planar skeleton $\Pi(G)$ of an optimal 2-planar graph $G$ is a 3-connected pentangulation.
\end{lma}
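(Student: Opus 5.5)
The plan is to verify the two properties in turn, namely that every face of $\Pi(G)$ is bounded by a $5$-cycle and that $\Pi(G)$ is 3-connected. Both follow from Lemma~\ref{lma:o2pg-structure} together with the simplicity of $G$. Throughout, fix the drawing given by Lemma~\ref{lma:o2pg-structure}.

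\textbf{Step 1: the five crossing edges in a face.} Since every crossing edge lies in the interior of a face $F$ of $\Pi(G)$, its endpoints lie on the boundary of $F$. Its endpoints are also not consecutive on the boundary walk of $F$. Indeed, an edge joining consecutive boundary vertices would duplicate a skeleton edge, contradicting simplicity. Alternatively, such an edge could be redrawn uncrossed, which contradicts the fact that it lies among the crossing edges in the interior.

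\textbf{Step 2: every face is a $5$-cycle.} I first show that each face boundary is a cycle rather than merely a closed walk of length $5$. Suppose the boundary walk of $F$ repeats a vertex. Since $G$ has no loops, the walk splits into closed subwalks of lengths $2$ and $3$. So the walk is a triangle $abc$ with a pendant traversal $a d a$, and it has only $4$ distinct vertices. The admissible non-consecutive pairs among $\{a,b,c,d\}$ are $db$ and $dc$, and $da$ is excluded as a skeleton edge. Five pairwise distinct edges cannot be drawn inside $F$ in a simple graph, which is a contradiction. A similar counting argument excludes a disconnected boundary, where the face would have fewer than $5$ boundary vertices. Hence each face is bounded by a $5$-cycle $v_1\dots v_5$. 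The only non-consecutive pairs are then the five diagonals, so the five crossing edges in $F$ are exactly the pentagram $v_1v_3, v_3v_5, v_5v_2, v_2v_4, v_4v_1$. Consequently $G[V(F)]\cong K_5$.

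\textbf{Step 3: 3-connectivity.} The boundary of every face is a cycle, so $\Pi(G)$ is 2-connected. Suppose $\{u,v\}$ is a 2-cut of $\Pi(G)$. By the standard plane-graph fact, there are two distinct faces $F_1,F_2$ whose boundaries both contain $u$ and $v$ (the faces "through" the separation pair). If $u$ and $v$ are non-consecutive on both $F_1$ and $F_2$, then Step 2 puts a crossing edge $uv$ inside both faces. This gives two distinct edges joining $u$ and $v$, contradicting simplicity. Otherwise $uv\in E(\Pi(G))$ and $u,v$ are non-consecutive on one of the two faces. The pentagram in that face then contains a crossing copy of $uv$, again a parallel edge.

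\textbf{Main obstacle.} The delicate step is Step 3. I expect the hardest part to be the precise choice of $F_1,F_2$ such that $u$ and $v$ are non-adjacent on at least one of them. Since $\{u,v\}$ separates $\Pi(G)$, each face containing both $u$ and $v$ has boundary passing through both sides of the separation. On a $5$-cycle, however, $u$ and $v$ can be consecutive on at most one face boundary on each side of the edge $uv$. I would therefore count the faces around $u$ that meet $v$: there are at least two, and at most two can have $uv$ as a boundary edge. The case in which exactly two faces contain $\{u,v\}$ and both use the edge $uv$ must be ruled out separately. In that case $uv$ together with the two faces would not separate anything, so $\{u,v\}$ would not be a cut.
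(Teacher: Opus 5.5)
The paper gives no proof of this lemma; it is imported from Bekos et al. So your derivation from Lemma~\ref{lma:o2pg-structure} and simplicity is an independent route. Its strategy is sound: a $5$-cycle face must carry exactly the pentagram, and a separation pair then forces a doubled edge $uv$. However, two steps fail as written.

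First, the disconnected-boundary case. Your claim that such a face has fewer than $5$ boundary vertices is false. A triangle $abc$ and a disjoint edge $de$ bound a face of length $3+2=5$ with five vertices. Since distinct boundary components lie in distinct components of $\Pi(G)$, this face has six pairs $\{a,b,c\}\times\{d,e\}$ that are non-adjacent in $\Pi(G)$. A $5$-cycle plus an isolated vertex has six boundary vertices. Local counting cannot rule these cases out, and your conclusion that $\Pi(G)$ is $2$-connected depends on doing so.

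The fix is global and uses optimality. Let $n,e,f,c$ be the numbers of vertices, edges, faces and components of $\Pi(G)$. Then $2e=5f$ and $5n-10=|E(G)|=e+5f=3e$, so Euler's formula gives $1+c=n-e+f=n-\tfrac{3}{5}e=2$. Hence $\Pi(G)$ is connected, every face boundary is a single closed walk of length $5$, and your triangle-plus-pendant argument finishes Step~2.

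Second, in Step~3 the ``Otherwise'' branch is unjustified, as you suspected. The two faces supplied by the standard fact may be exactly the two faces on either side of the edge $uv$. The remark that ``$uv$ together with the two faces would not separate anything'' is intuition rather than proof. Here is a concrete argument.

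By $2$-connectivity, $u$ has neighbours in at least two components of $\Pi(G)-\{u,v\}$. Label each edge in the rotation at $u$ by the component containing its other end, or by $v$. Deleting the $v$-label, if present, leaves a cyclic sequence with at least two distinct labels, hence at least two label changes. Reinserting the single $v$-label breaks at most one of these adjacencies. So some consecutive edges $ux,uy$ have $x,y$ in distinct components.

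The face at this angle has boundary cycle $x\,u\,y\cdots x$, and its $y$--$x$ path must pass through $v$. So $u$ and $v$ lie on this $5$-cycle non-consecutively, and its pentagram contains a crossing edge $uv$. If $uv\in E(\Pi(G))$, this is already a parallel edge. If not, there are at least two such angles at $u$. Since a face boundary cycle meets $u$ in only one angle, these give two distinct faces and hence two crossing edges $uv$. With these two repairs your proof is complete.
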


\section{Locally Optimal 2-Planar Graphs and Bipartite Reductions}\label{sec:construct-bgs}

\begin{figure}[!b]
    \centering
    \includegraphics[page=1, width=0.6\linewidth]{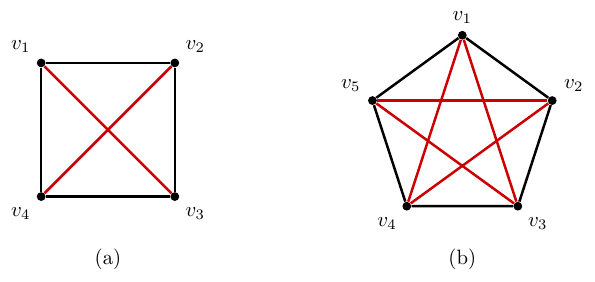}
 \caption{Two crossing configurations in locally optimal 2-planar drawings.}
    \label{fig:crossing-configs}
\end{figure}

In the study of perfect matchings, edge contraction is a commonly used technique. However, contracting an edge in a 2-planar graph may destroy its 2-planarity. To overcome this difficulty, we introduce \emph{locally optimal} 2-planar graphs, similar to the approach used in 1-planar graphs~\cite{fujisawa2018matching}. A 2-planar graph $G$ is said to be \emph{locally optimal} if every crossing edge in $G$ belongs to one of the following two configurations:
\begin{enumerate}[label={\rm(\roman*)}]
    \item \emph{Single-crossing configuration} (or \emph{$K_4$ configuration}): If an edge $v_1v_3$ crosses edge $v_2v_4$, and neither crosses any other edge, then $\{v_1, v_2, v_3, v_4\}$ induces a $K_4$ subgraph, and the edges in this $K_4$ other than $v_1v_3$ and $v_2v_4$ are uncrossed (see Figure~\ref{fig:crossing-configs}(a)).
    \item \emph{Double-crossing configuration} (or \emph{$K_5$ configuration}): If an edge $v_1v_3$ is crossed by two edges $v_2v_4$ and $v_2'v_5$, then $v_2v_4$ and $v_2'v_5$ share exactly one endvertex (say, $v_2=v_2'$), and $\{v_1, v_2, v_3, v_4, v_5\}$ induces a $K_5$ subgraph. Here, $v_iv_{i+2}$ crosses both $v_{i+1}v_{i+3}$ and $v_{i+1}v_{i+4}$ for each $1\leq i\leq 5$ and the subscripts are taken modulo $5$. The remaining edges of this $K_5$ are uncrossed (see Figure~\ref{fig:crossing-configs}(b)).
\end{enumerate}

From Lemma~\ref{lma:o2pg-structure}, we can see that every optimal 2-planar graph is locally optimal. The following lemma shows that locally optimal 2-planarity is preserved under certain operations.

\begin{lma} \label{lma:locally-optimal-preservation}
    Let $G$ be a locally optimal 2-planar graph. Then each of the following operations on $G$ preserves the locally optimal 2-planarity (or simply local optimality):
    \begin{enumerate}[label={\rm(\alph*)}]
        \item deleting a vertex;
        \item contracting a non-crossing edge;
        \item deleting a pair of adjacent crossing edges in a double-crossing configuration;
        \item deleting a crossing edge in a single-crossing configuration.
    \end{enumerate}
\end{lma}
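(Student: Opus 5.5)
We check each operation against the definition: in the resulting drawing every edge must still be crossed at most twice, and every crossing edge must still lie in a single-crossing or a double-crossing configuration. We keep the drawing of $G$ unchanged outside the modified part. So we only need to check edges whose crossing pattern changes and edges whose endvertices change. The key observation is that no operation creates a new crossing, so every surviving crossing edge must either stay inside its original configuration or have that configuration degrade in a controlled way.

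For (a), deleting a vertex $v$ removes all edges at $v$. A crossing edge $e$ that survives lay, in $G$, in a $K_4$ or $K_5$ configuration. If that configuration contains $v$, all of its crossing partners that remain have endpoints still present, but the configuration is no longer complete. So the definition as literally stated ("\emph{if} an edge crosses \dots then \dots") must be read as a condition on crossings that actually occur. If $e$ is now crossed by nothing, it is a non-crossing edge and imposes no condition. If $e$ is still crossed by one edge $f$, we must show that $e,f$ form a single-crossing configuration, which means showing that the four endpoints induce a $K_4$ with uncrossed sides. Here is a difficulty: in a $K_5$ configuration minus a vertex, two crossing edges that remain, for example $v_1v_3$ and $v_2v_4$ after deleting $v_5$, have sides $v_1v_2,v_2v_3,v_3v_4$ uncrossed, but $v_4v_1$ was crossed by $v_3v_5$ and $v_2v_5$, both now deleted. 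So $v_4v_1$ becomes uncrossed and the $K_4$ condition holds. Checking all sub-cases of deleting a vertex from $K_5$ in this way (the remaining four vertices carry exactly the crossing pair $v_1v_3, v_2v_4$, up to symmetry) settles (a). Deleting a vertex from a $K_4$ configuration leaves no crossings in it.

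For (c) and (d), we proceed in the same way. In (d), deleting $v_1v_3$ from a $K_4$ configuration leaves $v_2v_4$ uncrossed. In (c), deleting two adjacent crossing edges of a $K_5$, say $v_2v_4$ and $v_2v_5$ (both crossing $v_1v_3$), leaves $v_1v_3$ uncrossed. The other crossing edges are $v_3v_5$, $v_4v_1$ and so on, and we verify from the cyclic crossing rule that what remains decomposes into crossings forming a $K_4$ configuration with uncrossed boundary, or into uncrossed edges. We use the rule that $v_iv_{i+2}$ crosses $v_{i+1}v_{i+3}$ and $v_{i+1}v_{i+4}$ and tabulate the remaining crossing pairs. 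I expect exactly one pair, $v_3v_5 \times v_4v_1$, to survive, on vertices $\{v_1,v_3,v_4,v_5\}$. Its sides are $v_3v_4$, $v_4v_5$, $v_5v_1$ and $v_1v_3$, all of which become uncrossed, so this is a $K_4$ configuration. The symmetric choice of adjacent pair is handled the same way.

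The main obstacle is (b), contracting a non-crossing edge $xy$. Since $xy$ is uncrossed, we can contract it within its own thin neighbourhood. This drags the edges at $y$ along $xy$ to $x$ and creates no new crossings. When contraction produces multiple edges, we delete the duplicates, keeping an uncrossed copy whenever one exists. The worrying cases are configurations containing both $x$ and $y$: in such a configuration the $K_4$ or $K_5$ collapses. For $K_4$ with $xy$ a side, say $x=v_1$ and $y=v_2$, the crossing edges $v_1v_3$ and $v_2v_4$ become parallel to the sides $v_2v_3$ and $v_1v_4$. So we keep the uncrossed copies, and the crossing disappears. For $K_5$ with $xy=v_1v_2$, the collapsed edges again duplicate uncrossed sides or other edges. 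We then have to verify that the remaining crossings form a $K_4$ configuration on $\{v_{xy},v_3,v_4,v_5\}$, again by tabulating. Configurations containing only one of $x,y$ are unchanged up to renaming. Two configurations cannot both lose uncrossed status, since the sides stay uncrossed. This finishes the proof.
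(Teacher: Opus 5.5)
Your plan follows the paper's proof. No operation creates a crossing, configurations disjoint from the modified part survive, and a $K_5$ configuration that loses a vertex, loses a pair of adjacent diagonals, or has a side contracted degrades to a $K_4$ configuration. Your checks for (a), (c), (d), and for configurations containing both $x$ and $y$ in (b), agree with the paper's; it contracts $v_4v_5$ where you contract $v_1v_2$, which is symmetric. The step that does not hold as written is your claim in (b) that configurations containing only one of $x,y$ are ``unchanged up to renaming''. Such a configuration can lose an edge when duplicates are deleted. If $z$ is a common neighbor of $x$ and $y$ and $xz$, $yz$ lie in different configurations, one of the two curves from $v_{xy}$ to $z$ must be removed. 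Your rule of keeping an uncrossed copy may then delete a diagonal of a $K_5$ configuration, which, unlike deleting a crossing edge of a $K_4$ configuration, destroys local optimality.

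Here is a concrete example. Draw a $K_5$ configuration on the pentagon $y,a,z,b,c$, so $yz$ crosses $ab$ and $ac$. Outside it, draw a $K_4$ configuration on $x,z,p,q$ with crossing pair $xp$, $zq$, so $xz$ is an uncrossed side. Add an uncrossed edge $xy$ in the outer face; this drawing is locally optimal. In $G/xy$, suppose you keep the uncrossed copy of $v_{xy}z$. Then the pentagon $v_{xy},a,z,b,c$ retains only four diagonals: $zc$ is still crossed by $ab$ and $bv_{xy}$, but the edge $v_{xy}z$ of that $K_5$ crosses nothing, so the double-crossing condition fails. If instead you keep the crossed copy, the side $v_{xy}z$ of the $K_4$ configuration on $v_{xy},z,p,q$ becomes crossed. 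So no choice makes the inherited drawing locally optimal. Dropping the $K_4$ and keeping just an uncrossed edge $xz$ already defeats your specific rule.

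The paper's proof makes the same assertion in its case $|\{x,y\}\cap V(\mathcal{C})|\le 1$, so this is a gap you share with it rather than one the paper resolves. Closing it needs one of two things. One is a redrawing argument: in this example $G/xy$ is a $K_5$ and a $K_4$ sharing the edge $v_{xy}z$, which can be redrawn with all crossings in the pentagram and the $K_4$ planar. The other is a proof that such common neighbors cannot arise, or are harmless, in the contractions used in Lemma~\ref{lma:bipartite-planar}.
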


\begin{proof}
    Let $G'$ be the graph obtained after applying one of these operations to $G$. It suffices to show that any crossing edge in $G'$ belongs to either a single-crossing or a double-crossing configuration.

    \begin{figure}[htbp]
        \centering
        \includegraphics[page=2, width=0.55\linewidth]{all_figures.pdf}
        \caption{Deletion of vertex $v$ from a double-crossing configuration ($K_5$).}
        \label{fig:vertex-deletion}
    \end{figure}

    (a) For $v \in V(G)$, let $G' = G - v$. Suppose that $e_1$ and $e_2$ cross each other in $G'$. Then $e_1$ and $e_2$ cross each other in $G$. Since $G$ is locally optimal 2-planar, they belong to a single-crossing or a double-crossing configuration $\mathcal{C}$ in $G$. Since $e_1$ and $e_2$ are in $G'$, $v$ is not an endvertex of $e_1$ or $e_2$. If $\mathcal{C}$ is a $K_4$ configuration, $v \notin V(\mathcal{C})$. Thus, the $K_4$ configuration remains in $G'$. If $\mathcal{C}$ is a $K_5$ configuration, then $\mathcal{C}$ contains a fifth vertex $w$. If $v \neq w$, then $v \notin V(\mathcal{C})$, and the $K_5$ configuration remains in $G'$. If $v = w$, then $w$ is deleted (see Figure~\ref{fig:vertex-deletion}). Since the two crossing edges in $\mathcal{C}$ incident to $w$ are removed, $e_1$ and $e_2$ cross in $G'$. The remaining four vertices of $\mathcal{C}$ induce a $K_4$ subgraph in $G'$. The edge in $\mathcal{C}$ that was previously crossed by these two removed edges is no longer crossed and becomes a non-crossing boundary edge. Thus, $e_1$ and $e_2$ are in a single-crossing configuration in $G'$ (see Figure~\ref{fig:vertex-deletion}).

    \begin{figure}[htbp]
        \centering
        \includegraphics[page=3, width=0.9\linewidth]{all_figures.pdf}
        \caption{Contraction of a non-crossing edge in single-crossing and double-crossing configurations.}
        \label{fig:contraction}
    \end{figure}

    (b) For a non-crossing edge $xy$, let $G' = G / xy$. Contracting a non-crossing edge does not create new crossings. Thus, any crossing pair $e_1, e_2$ in $G'$ corresponds to a crossing pair in $G$. This crossing pair belongs to a crossing configuration $\mathcal{C}$ in $G$. We consider the intersection $\{x, y\} \cap V(\mathcal{C})$. If $|\{x, y\} \cap V(\mathcal{C})| \le 1$, the contraction of $xy$ does not merge any two vertices of $\mathcal{C}$. Thus, $\mathcal{C}$ remains a valid crossing configuration in $G'$. If $\{x, y\} \subseteq V(\mathcal{C})$, then $xy$ must be a boundary edge of $\mathcal{C}$ since it is uncrossed. If $\mathcal{C}$ is a single-crossing configuration, contracting the boundary edge $xy$ eliminates all crossings in $\mathcal{C}$ and results in a $K_3$ subgraph in $G'$ (see Figure~\ref{fig:contraction}(a)). This contradicts that $e_1$ and $e_2$ are a pair of crossing edges in $G'$. Hence $\mathcal{C}$ is a $K_5$ configuration. Without loss of generality, let $xy$ be the boundary edge $v_4v_5$ (see Figure~\ref{fig:contraction}(b)). Contracting $v_4v_5$ merges $v_4$ and $v_5$ into a single vertex $v_{45}$. The crossing edges $v_2v_4$ and $v_2v_5$ become a single edge $v_2v_{45}$. The double-crossing becomes a single-crossing between $v_1v_3$ and $v_2v_{45}$. In addition, $\{v_1, v_2, v_3, v_{45}\}$ induces a $K_4$ in $G'$. Thus, $e_1$ and $e_2$ are in a single-crossing configuration in $G'$.

    \begin{figure}[htbp]
        \centering
        \includegraphics[page=4, width=0.65\linewidth]{all_figures.pdf}
        \caption{Deletion of crossing edges in a double-crossing configuration.}
        \label{fig:deletion-double}
    \end{figure}

    (c) Let $G'$ be the graph obtained from $G$ by deleting the crossing edges $v_2v_4$ and $v_2v_5$ in a double-crossing configuration $\mathcal{C}^*$ (see Figure~\ref{fig:deletion-double}). In $G'$, the edge $v_1v_3$ is no longer crossed, and thus it becomes a non-crossing edge. The remaining edges of $\mathcal{C}^*$ induce a $K_4$ subgraph on $\{v_1, v_3, v_4, v_5\}$ and a $K_3$ subgraph on $\{v_1, v_2, v_3\}$, sharing the non-crossing edge $v_1v_3$. Since no new crossings are created, any crossing pair $e_1, e_2$ in $G'$ must fall into one of two cases. If the endvertices of $e_1$ and $e_2$ all belong to $V(\mathcal{C}^*)$, they form the unique crossing pair of the single-crossing configuration (i.e., the $K_4$ subgraph on $\{v_1, v_3, v_4, v_5\}$). Otherwise, they must belong to another crossing configuration $\mathcal{C}$ in $G$ ($\mathcal{C} \neq \mathcal{C}^*$). In the latter case, since we only deleted edges within $\mathcal{C}^*$, the crossing configuration $\mathcal{C}$ remains intact in $G'$.

    \begin{figure}[htbp]
        \centering
        \includegraphics[page=5, width=0.65\linewidth]{all_figures.pdf}
        \caption{Deletion of a crossing edge in a single-crossing configuration.}
        \label{fig:deletion-single}
    \end{figure}

    (d) Let $G'$ be the graph obtained from $G$ by deleting the crossing edge $v_2v_4$ in a single-crossing configuration $\mathcal{C}^*$ (see Figure~\ref{fig:deletion-single}). In $G'$, the remaining diagonal $v_1v_3$ is no longer crossed, and thus it becomes a non-crossing edge. The remaining edges of $\mathcal{C}^*$ induce two $K_3$ subgraphs on $\{v_1, v_2, v_3\}$ and $\{v_1, v_4, v_3\}$, sharing the non-crossing edge $v_1v_3$. Since no new crossings are created, and the induced subgraph on $V(\mathcal{C}^*)$ in $G'$ contains no crossing edges, any crossing pair $e_1, e_2$ in $G'$ must belong to another crossing configuration $\mathcal{C}$ in $G$ ($\mathcal{C} \neq \mathcal{C}^*$). Since we only deleted an edge within $\mathcal{C}^*$, the crossing configuration $\mathcal{C}$ remains intact in $G'$.
\end{proof}

For a graph $G$ and $S \subseteq V(G)$, let \emph{$B(G,S)$} be the bipartite graph constructed from $G$ as follows: 
\begin{enumerate}[label=(\arabic*)]
    \item remove all even components of $G-S$;
    \item contract each odd component of $G-S$ into a single vertex and delete multiple edges;
    \item delete all edges with both endvertices in $S$.
\end{enumerate}

\begin{lma} \label{lma:bipartite-planar}
    Let $G$ be an optimal 2-planar graph and $S \subseteq V(G)$. Then the bipartite graph $B(G,S)$ is planar.
\end{lma}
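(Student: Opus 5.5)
The plan is to transform the drawing of $G$ into a plane drawing of $B(G,S)$ (up to deleting edges, which preserves planarity). We use only the operations of Lemma~\ref{lma:locally-optimal-preservation}, and at the end remove all remaining crossings. Since $G$ is optimal, it is locally optimal by Lemma~\ref{lma:o2pg-structure}, so every crossing edge lies in a $K_4$ or $K_5$ configuration. We proceed in three stages.

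\textbf{Stage 1.} Delete all vertices of the even components of $G-S$ one by one. By Lemma~\ref{lma:locally-optimal-preservation}(a), the resulting graph $G_1$ is still locally optimal 2-planar.

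\textbf{Stage 2.} Contract each odd component $D$ of $G-S$ to a single vertex. Here the obstacle is that $D$ need not be connected via non-crossing edges, and Lemma~\ref{lma:locally-optimal-preservation}(b) only allows contracting uncrossed edges. To handle this, first show that $D$ is connected in its non-crossing subgraph. Take a crossing edge $uv$ inside $D$. In its configuration (a $K_4$ or $K_5$), the endpoints $u,v$ are joined by a path of boundary edges of that configuration, and these boundary edges are uncrossed. If the intermediate vertices of that path lie in $S$, we cannot use it directly. Instead, we delete crossing edges with both ends in $D$ or with both ends in $S$ using operations (c) and (d). This is legitimate because these edges are either irrelevant to $B(G,S)$ (both ends in $S$) or will be absorbed into the contracted vertex anyway (both ends in $D$).

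In a $K_4$ configuration, delete one crossing edge. In a $K_5$ configuration, delete a suitable adjacent pair. Each deletion makes the remaining diagonal uncrossed, so an edge inside $D$ can be made non-crossing whenever needed. Concretely, iterate: while some component of $G-S$ contains a crossing edge $xy$, apply (d) to the crossing partner of $xy$ if the configuration is a $K_4$. If it is a $K_5$, apply (c) to the two edges crossing $xy$. Both are possible because, in the $K_5$ configuration, the two edges crossing $v_1v_3$ share the endpoint $v_2$. Then $xy$ becomes uncrossed. Deleting edges with an endpoint in $S$ and the other in $D$ might seem to lose edges of $B(G,S)$, but it does not matter. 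The planarity we want is for a graph containing $B(G,S)$ as a subgraph, provided every pair (component, $S$-vertex) that was adjacent stays adjacent. I expect this point to be the main check. The resolution is that the boundary edges of the configuration keep such adjacencies up to contraction, since the configuration is a complete graph, so every vertex of it remains adjacent to $D$ through uncrossed edges after the contraction. Once every edge inside $D$ is uncrossed, contract them all by (b). Preserving $G[D]$'s connectivity is automatic, because deletions only turn one crossing edge into a non-crossing one while keeping the other.

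\textbf{Stage 3.} In the resulting locally optimal graph $G_2$, each odd component is a single vertex, so the vertex set is $S \cup \{d_D\}$. Delete all edges inside $S$. Each remaining crossing pair has both edges joining $S$ to component-vertices. Now use the configuration structure. In a $K_4$ configuration on $v_1,\dots,v_4$ whose edges are all bipartite, bipartiteness of a $K_4$ is impossible, so at least one crossing edge must already have been deleted, a contradiction. In a $K_5$ configuration, similarly, an odd $K_5$ cannot be bipartite, so some edge lies within one side. The key claim is that, within any configuration, after removing non-bipartite edges the surviving crossing edges pairwise do not cross or can be rerouted along the uncrossed boundary. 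This is a finite case check on the 2-colorings of the vertices of $K_4$ and $K_5$, using the fact that two component-vertices are never adjacent. Each crossing between two bipartite edges in a $K_4$ with parts $\{v_1,v_3\}$ against $\{v_2,v_4\}$ would force $v_1v_3$ to be a non-edge, which is impossible. Rerouting one crossing edge alongside the boundary path through a shared vertex then yields a plane drawing, so $B(G,S)$ is planar.
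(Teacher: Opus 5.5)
Your overall plan is the paper's: delete the even components, shrink each odd component using operations (a)--(d) of Lemma~\ref{lma:locally-optimal-preservation}, then delete the edges inside $S$. Stage 2 differs in order. You delete the crossing partners of a crossing edge $xy$ inside $D$ before contracting, so deleted edges may join $S$ to $D$ or lie inside $D$. That works, but the reason is not that ``deletions only turn one crossing edge into a non-crossing one''. The reason is that every endpoint of a deleted partner is joined to $x$ or $y$ by an uncrossed boundary edge of the configuration. For $xy=v_1v_3$ in a $K_5$ configuration these edges are $v_2v_1$, $v_3v_4$, $v_1v_5$. Uncrossed edges are never deleted later, so $D$ stays connected and every $S$-vertex adjacent to $D$ stays adjacent. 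The paper avoids this bookkeeping by first contracting all uncrossed edges of $O_i$. The partners of any remaining crossing edge of $O_i'$ then have both ends in $S$.

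The genuine gap is Stage 3, which is where planarity is actually obtained.
\begin{enumerate}
\item You leave the ``key claim'' to a finite case check on 2-colourings that you never carry out.
\item The sample case is wrong. If $v_1v_3$ and $v_2v_4$ are both $S$--$Y$ edges, then $v_1$ and $v_3$ lie on opposite sides, so the parts $\{v_1,v_3\}$ versus $\{v_2,v_4\}$ cannot arise, and nothing forces $v_1v_3$ to be a non-edge.
\item The fallback of rerouting an edge along the boundary is unjustified, since a rerouted edge may pick up new crossings.
\end{enumerate}
The missing idea is short. Argue in $G_2$, before the edges inside $S$ are removed, since only there is every configuration $\mathcal{C}$ still a complete graph. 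The contracted vertices form an independent set $Y$, so $|V(\mathcal{C})\cap Y|\le 1$. Two edges that cross in a $K_4$ or $K_5$ configuration are vertex-disjoint. So if an $S$--$Y$ edge $e$ is crossed by $f$, the unique $Y$-vertex of $\mathcal{C}$ is an endpoint of $e$, and both ends of $f$ lie in $S$. Deleting the edges inside $S$ therefore removes every crossing. No crossing between two bipartite edges survives, no rerouting is needed, and what remains is a plane drawing of $B(G,S)$.
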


\begin{proof}
    We construct $B(G,S)$ from $G$ iteratively. We first remove all even components of $G-S$. By Lemma~\ref{lma:locally-optimal-preservation}(a), this removal preserves the locally optimal 2-planarity. Let $O_1, O_2, \dots, O_k$ be the odd components of $G-S$, for $k\ge 0$. 

    For each odd component $O_i$, we apply contraction and deletion steps until $O_i$ becomes a single vertex $y_i$. We first repeatedly contract all non-crossing edges in $O_i$ and delete multiple edges. By Lemma~\ref{lma:locally-optimal-preservation}(b), these operations preserve the locally optimal 2-planarity. Let $O_i'$ be the resulting graph after these contractions. If $O_i'$ contains edges after this step, they must be crossing edges. Let $e = v_1v_3 \in E(O_i')$ be such an edge. 

    If $v_1v_3$ is crossed by exactly one edge, say $v_2v_4$, then $\{v_1, v_2, v_3, v_4\}$ induces a $K_4$. Since all non-crossing edges in $O_i'$ are contracted, $v_2$ and $v_4$ must belong to $S$. We delete $v_2v_4$. By Lemma~\ref{lma:locally-optimal-preservation}(d), this preserves local optimality. If $v_1v_3$ is crossed by $v_2v_4$ and $v_2v_5$, then $\{v_1, \dots, v_5\}$ induces a $K_5$. Similarly, $v_2, v_4, v_5$ must belong to $S$. We delete $v_2v_4$ and $v_2v_5$. By Lemma~\ref{lma:locally-optimal-preservation}(c), this preserves local optimality.

    These deletions make $v_1v_3$ non-crossing. We then contract it in the next iteration. We repeat this process until each odd component $O_i$ is reduced to a single vertex $y_i$. Let $Y = \{y_1, \dots, y_k\}$. Let $G^*$ be the resulting graph with vertex set $S \cup Y$. By Lemma~\ref{lma:locally-optimal-preservation}, $G^*$ remains a locally optimal 2-planar graph. Note that $Y$ is independent in $G^*$.

    Next, we consider any crossing edge $e$ in $G^*$ that has endvertices separately in $S$ and $Y$. Since $G^*$ is locally optimal, $e$ must belong to a single-crossing or a double-crossing configuration, denoted by $\mathcal{C}$. We claim that $|Y \cap V(\mathcal{C})| = 1$. Otherwise, $|Y \cap V(\mathcal{C})| \ge 2$. Since $\mathcal{C}$ is a complete graph, any two vertices in $Y \cap V(\mathcal{C})$ must be adjacent in $G^*$, contradicting that $Y$ is an independent set. Thus, exactly one vertex of $\mathcal{C}$ belongs to $Y$, and all the other vertices of $\mathcal{C}$ belong to $S$.

    Consequently, the edge (or edges) in $\mathcal{C}$ that crosses $e$ must have both endvertices in $S$. Finally, we construct $B(G,S)$ by deleting all edges with both endvertices in $S$ from $G^*$. This deletion operation naturally removes all edges that cross $e$, rendering every edge between $S$ and $Y$ non-crossing. Therefore, $B(G,S)$ is a planar bipartite graph.
\end{proof}

\section{Proofs of Theorems~\ref{thm:o2pg-1ext} and~\ref{thm:o2pg-2ext}}\label{sec:proofs-1-2-ext}

\begin{proof}[Proof of Theorem~\ref{thm:o2pg-1ext}]
    Let $G$ be a 4-connected optimal 2-planar graph of even order. Suppose to the contrary that $G$ is not 1-extendable. Then there exists an edge $uv \in E(G)$ such that $G-\{u,v\}$ has no perfect matching.

    Let $G' = G-\{u,v\}$. By Tutte's theorem (Theorem~\ref{thm:tutte}), there is a subset $S' \subset V(G')$ such that $|S'| < o(G'-S')$. Let $S = S' \cup \{u,v\}$. Then $G-S=G'-S'$. Since $|V(G')|$ is even, $o(G'-S')$ and $|S'|$ have the same parity. Thus, we have
    $$
    |Y| = o(G-S) = o(G'-S') \ge |S'| + 2 = |S|,
    $$
    where $Y$ is the set of odd components of $G-S$.

    Consider the bipartite graph $B(G,S)$. Since $G$ is 4-connected, each vertex in $Y$ has degree at least $4$ in $B(G,S)$. Thus, we have
    $$
    |E(B(G,S))| \ge 4|Y|.
    $$

    \noindent By Lemma~\ref{lma:bipartite-planar}, $B(G,S)$ is a planar bipartite graph. We have
    $$
    |E(B(G,S))| \le 2|V(B(G,S))| - 4 = 2(|S| + |Y|) - 4.
    $$

    The above two inequalities yield $4|Y| \le 2|S| + 2|Y| - 4$. This implies $|Y| \le |S| - 2$, contradicting that $|Y| \ge |S|$. Therefore, $G$ is 1-extendable.
\end{proof}

\begin{figure}[h]
    \centering
    \includegraphics[page=6, width=0.75\linewidth]{all_figures.pdf}

    \caption{Construction of a 3-connected optimal 2-planar graph that is not 1-extendable. (a) The graph $H$. (b) The 3-connected pentangulation $\Pi(G)$ obtained by embedding $H$ into each triangular face of $K_4$.}\label{fig:3-conn-counter-ex}
\end{figure}

\begin{rmk} \label{rmk:4-conn}
    We note that the connectivity condition in Theorem~\ref{thm:o2pg-1ext} cannot be relaxed to 3-connectivity. We construct a 3-connected optimal 2-planar graph that is not 1-extendable as follows. Let $H$ be the plane graph depicted in Figure~\ref{fig:3-conn-counter-ex}(a) in which the exterior face is bounded by a triangle and all interior faces are pentagons. We embed one copy of the graph $H$ into each triangular face of a complete graph $K_4$ by identifying the 3-cycle of $H$ with the 3-cycle of the copy. This process results in a pentangulation $\Pi(G)$, as illustrated in Figure~\ref{fig:3-conn-counter-ex}(b). By inserting five mutually crossing edges into the interior of each pentagonal face of $\Pi(G)$, we obtain a 2-planar graph $G$. 

    We can verify that $G$ contains a perfect matching. Next, we verify that $G$ is optimal. Since $H$ has 52 vertices, 84 edges, and 33 interior faces, we can compute that $\Pi(G)$ has $200$ vertices, 132 faces and 330 edges. Inserting 5 crossing edges into each face yields $|E(G)| = 330 + 132 \times 5 = 990$. So it is confirmed that $G$ is an optimal 2-planar graph.

    By Lemma~\ref{lma:skeleton-3-conn}, the true-planar skeleton $\Pi(G)$ is 3-connected, so $G$ is 3-connected. However, $G$ is not 4-connected, as the three boundary vertices of each copy of $H$ separate its interior vertices from the rest of the graph, forming a 3-cut.

    We now use Tutte's Theorem to show that $G$ is not 1-extendable. Let $V_0$ be the set of the 4 vertices of the base $K_4$. Removing $V_0$ from $G$ leaves 4 odd components. Let $uv$ be any edge of the base $K_4$, $G' = G - \{u,v\}$ and $S = V_0 \setminus \{u,v\}$. Then $S$ is a Tutte set of $G'$ since $G' - S = G - V_0$ consists of 4 odd components.
\end{rmk}

To prove Theorem~\ref{thm:o2pg-2ext}, we first show some properties of optimal 2-planar graphs.

\begin{lma}\label{lma:edge-pattern-o2pg}
    For each vertex $v$ in an optimal 2-planar graph $G$, the edges incident with $v$ are arranged clockwise in an alternating pattern of one non-crossing edge and two crossing edges. Hence the degree $d_G(v)$ is a multiple of 3.
\end{lma}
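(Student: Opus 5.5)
We work with the drawing provided by Lemma~\ref{lma:o2pg-structure}. In it, $\Pi(G)$ is a pentangulation, and inside each pentagonal face $f=w_1w_2w_3w_4w_5$ of $\Pi(G)$ lie exactly five crossing edges. The first step is to identify these five edges: they are the five diagonals $w_iw_{i+2}$, forming a $K_5$ double-crossing configuration with boundary cycle $f$. This follows because five crossing edges drawn inside a 5-face with endpoints on its boundary, each crossed at most twice, and the graph being simple (so no diagonal duplicates a boundary edge), leave only the pentagram. Alternatively, it follows from local optimality (every optimal 2-planar graph is locally optimal), which forces each crossing edge into such a $K_5$. Since every crossing edge of $G$ lies inside some face of $\Pi(G)$, every crossing edge is one of these pentagram diagonals, and every non-crossing edge is an edge of $\Pi(G)$.

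Next, fix a vertex $v$ and list, in clockwise order, the faces $f_1,\dots,f_k$ of $\Pi(G)$ incident with $v$, separated by the non-crossing edges $e_1,\dots,e_k$ of $\Pi(G)$ at $v$. Here $f_j$ lies between $e_j$ and $e_{j+1}$. Since $\Pi(G)$ is 3-connected by Lemma~\ref{lma:skeleton-3-conn}, every face is bounded by a cycle and $v$ appears exactly once on each incident face boundary. So the rotation at $v$ in $\Pi(G)$ is a single cyclic sequence alternating between edges $e_j$ and faces $f_j$, with $k=d_{\Pi(G)}(v)$. Inside $f_j$, say $f_j=vw_2w_3w_4w_5$ with $e_j=vw_2$ and $e_{j+1}=vw_5$, exactly two diagonals of the pentagram are incident with $v$: $vw_3$ and $vw_4$. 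Both leave $v$ into the interior of $f_j$, hence lie strictly between $e_j$ and $e_{j+1}$ in the rotation at $v$.

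Reading the full rotation at $v$ then gives the pattern $e_1$, two crossing edges in $f_1$, $e_2$, two crossing edges in $f_2$, and so on, which is the claimed alternation. Counting, $d_G(v)=k+2k=3k$, a multiple of $3$.

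The main obstacle is the first step: rigorously justifying that the five crossing edges in each face are exactly the five diagonals, each with both endpoints on the face boundary and with no edge entering through a vertex twice. If Lemma~\ref{lma:o2pg-structure} as cited does not already state this, I would argue as follows. A crossing edge drawn inside a face $f$ must have both ends on $\partial f$, since it cannot cross an edge of $\Pi(G)$, which is uncrossed. There are only five non-boundary pairs among the five boundary vertices, and they are realized as the five pentagram chords. Simplicity of $G$ forbids parallel chords, so all five chords appear exactly once.
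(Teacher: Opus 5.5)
Your proof is correct and follows the same route as the paper. Both arguments walk around $v$ through the faces of $\Pi(G)$, note that each pair of consecutive non-crossing edges at $v$ bounds a pentagonal face containing exactly two crossing edges at $v$, and conclude that $d_G(v)=3\,d_{\Pi(G)}(v)$. You are more careful than the paper, which simply asserts that the five vertices of each face induce a $K_5$. You instead derive the pentagram from the count of five interior crossing edges together with simplicity, and use the 3-connectivity of $\Pi(G)$ to guarantee that $v$ appears exactly once on each incident face.
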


\begin{proof}
   By Lemma~\ref{lma:o2pg-structure}, $\Pi(G)$ is a pentangulation, and the five vertices of each pentagonal face in $\Pi(G)$ induce a $K_5$ in $G$. Consider any two consecutive non-crossing edges $e_1$ and $e_2$ incident to $v$ in this clockwise order. These two edges must lie on the boundary of a pentagonal face $f$ in $\Pi(G)$, and there are exactly two crossing edges located between the edges $e_1$ and $e_2$ within $f$. Since this property holds for every pair of consecutive non-crossing edges incident to $v$, the alternating pattern follows, which directly implies that $d_G(v)$ is a multiple of 3.
\end{proof}

\begin{lma}\label{lma:neighbor-connectivity}
    Let $v$ be a vertex in an optimal 2-planar graph $G$. Under a fixed drawing of $G$ as described in Lemma~\ref{lma:o2pg-structure}, let $u_1, u_2, \dots, u_d$ be the neighbors of $v$ arranged clockwise. Then $G$ satisfies the following properties, for $1\le i\le d$ (modulo $d$):

    \begin{enumerate}[label={\rm(\roman*)}]
        \item $u_i u_{i+1}\in E(G)$,  
        \item if $u_i u_{i+2}\notin E(G)$, then $vu_{i+1}$ is a non-crossing edge.
    \end{enumerate}
\end{lma}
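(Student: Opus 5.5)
The plan is to read both properties off the local structure around $v$ given by Lemma~\ref{lma:o2pg-structure} and Lemma~\ref{lma:edge-pattern-o2pg}. Fix the drawing, and let $f_1,\dots,f_k$ (with $d=3k$) be the pentagonal faces of $\Pi(G)$ incident to $v$, in clockwise order. Each face $f_j$ has boundary $v\,a_j\,b_j\,c_j\,a_{j+1}$, where $va_j$ and $va_{j+1}$ are consecutive non-crossing edges at $v$. Since the five vertices of $f_j$ induce a $K_5$ whose five diagonals are drawn inside $f_j$, the edges at $v$ inside $f_j$ are exactly the two crossing edges $vb_j$ and $vc_j$. So the clockwise neighbor sequence of $v$ is
\[
a_1, b_1, c_1, a_2, b_2, c_2, \dots, a_k, b_k, c_k,
\]
which is consistent with the alternating pattern of Lemma~\ref{lma:edge-pattern-o2pg}.

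For (i), I would check that each consecutive pair is adjacent. The pairs $(a_j,b_j)$, $(b_j,c_j)$ and $(c_j,a_{j+1})$ are boundary edges of $f_j$. Cyclically, every consecutive pair $(u_i,u_{i+1})$ is of one of these three types.

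For (ii), I would check that every pair at distance two is adjacent unless the middle vertex is some $a_j$. The pairs $(a_j,c_j)$ and $(b_j,a_{j+1})$ are diagonals of $f_j$, hence edges of the $K_5$ on $V(f_j)$. The only remaining pairs are $(c_{j-1},b_j)$, whose middle vertex $a_j$ is joined to $v$ by a non-crossing edge. Taking the contrapositive gives (ii): if $u_iu_{i+2}\notin E(G)$, then $u_{i+1}=a_j$ for some $j$, so $vu_{i+1}$ is non-crossing.

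The main obstacle is degenerate situations. If $k=1$ or the faces around $v$ repeat vertices, the cyclic labels could coincide, and $u_i,u_{i+1}$ might not be distinct. I would rule this out using Lemma~\ref{lma:skeleton-3-conn}. Since $\Pi(G)$ is a 3-connected plane graph, $v$ has degree at least $3$ in $\Pi(G)$, so $k\ge 3$. Also, the faces around $v$ are bounded by cycles that meet only at $v$ and along consecutive shared edges. Hence the $a_j$ are distinct, and each face's five vertices are distinct. The statement only concerns the listed neighbors and modular indexing, so a coincidence between vertices of non-adjacent faces would not harm the adjacency claims. I would note that simplicity of $G$ makes the neighbor list well-defined as a cyclic order of edges.
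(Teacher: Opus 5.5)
Your proposal is correct and follows essentially the same route as the paper. Both arguments read (i) and (ii) directly off two facts: the alternating pattern at $v$ (one non-crossing edge, then two crossing edges), and the $K_5$ induced on each pentagonal face of $\Pi(G)$. Your explicit labeling $a_j,b_j,c_j$, your enumeration of every consecutive and distance-two pair, and your use of the 3-connectivity of $\Pi(G)$ to exclude degenerate configurations make explicit what the paper's shorter argument leaves implicit.
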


\begin{proof}
    (i) Select a non-crossing edge $vu_k$. By Lemma~\ref{lma:edge-pattern-o2pg}, the edges incident to $v$ alternate in one non-crossing edge and two crossing edges. Thus, we have four consecutive neighbors $\{u_k, u_{k+1}, u_{k+2}, u_{k+3}\}$ such that $vu_k$ and $vu_{k+3}$ are non-crossing edges, and $vu_{k+1}$ and $vu_{k+2}$ are crossed. Note that the vertices $\{v, u_k, u_{k+1}, u_{k+2}, u_{k+3}\}$ lie in the boundary of pentagonal face $f$ in $\Pi(G)$ and induce a $K_5$ in $G$, which immediately implies (i).

    (ii) If $u_iu_{i+2} \notin E(G)$, then the vertices $u_i$ and $u_{i+2}$ cannot belong to the same induced $K_5$ corresponding to a pentagonal face in $\Pi(G)$, and they must belong to two different pentagonal faces incident to $v$. Hence $vu_i$ and $vu_{i+2}$ are crossing edges, and $vu_{i+1}$ must be the common boundary shared by these two pentagonal faces in $\Pi(G)$. Since all edges in $\Pi(G)$ are non-crossing, $vu_{i+1}$ is a non-crossing edge.
\end{proof}

\begin{lma} \label{lma:4cut-cycle}
    Let $G$ be a 4-connected optimal 2-planar graph. If $S$ is a 4-cut of $G$, then the induced subgraph $G[S]$ contains a 4-cycle.
\end{lma}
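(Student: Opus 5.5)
We use the local structure around vertices from Lemmas~\ref{lma:edge-pattern-o2pg} and~\ref{lma:neighbor-connectivity}, combined with the planarity of the skeleton. Fix a drawing as in Lemma~\ref{lma:o2pg-structure}, let $S$ be a 4-cut, and let $A$ be a component of $G-S$ with $B=V(G)\setminus(S\cup A)\neq\emptyset$. Pick $a\in A$ adjacent to some vertex of $S$; such a vertex exists because $G$ is connected.

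The first idea is a ``neighbor cycle'' argument. For any $s\in S$ with a neighbor in both $A$ and $B$, the neighbors $u_1,\dots,u_d$ of $s$ in clockwise order form a closed walk by Lemma~\ref{lma:neighbor-connectivity}(i). Deleting $S\setminus\{s\}$ removes at most three of the $u_i$. Traversing the cyclic sequence from an $A$-neighbor to a $B$-neighbor, every route must hit a vertex of $S$. Both arcs of the cycle must therefore contain a vertex of $S\setminus\{s\}$, because otherwise a path $A$--$B$ avoiding $S$ exists. Chords $u_iu_{i+2}$ also let us skip a vertex unless $su_{i+1}$ is non-crossing (Lemma~\ref{lma:neighbor-connectivity}(ii)). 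So on each side the blocking vertices must be consecutive pairs, or singletons at non-crossing positions. Since every vertex of $S$ must have neighbors in both $A$ and $B$ (else $S$ minus it would be a smaller cut, contradicting 4-connectivity), this applies to all four vertices of $S$.

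The second step turns this into a cycle. A cleaner route uses the planar skeleton. In $G$, each pentagonal face of $\Pi(G)$ induces a $K_5$, so every face lies entirely within $A\cup S$ or within $B\cup S$. The faces meeting both $A$ and $S$ therefore form a region whose boundary, in $\Pi(G)$, is a closed curve passing only through vertices of $S$ and through faces that meet both sides. Consecutive separator points along this curve are either joined by a skeleton edge (two vertices of $S$ on a common boundary) or lie in a common pentagonal face, hence in a common $K_5$. In either case they are adjacent in $G$. This yields a closed walk in $G[S]$ visiting the $S$-vertices in a cyclic order that separates $A$ from $B$. The walk has length at least 3. We must rule out that it is a triangle, or that it visits one vertex twice, and show that all four vertices appear. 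Minimality of $S$, with each vertex needed, forces all four vertices to lie on the separating curve. A separating closed curve through four distinct points in cyclic order gives a 4-cycle $s_1s_2s_3s_4$ in $G[S]$.

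The main obstacle is making the separating-curve argument rigorous. The curve could pass through a vertex of $S$ twice, or a face could contain vertices of $A$, $S$ and $B$ simultaneously; the latter is impossible since a face is a clique. We must show each $s_i$ appears exactly once and consecutive ones are adjacent. I would handle this by taking the union $R$ of closed faces of $\Pi(G)$ incident to $A$. We then argue three things. First, $\partial R$ consists of skeleton edges with both ends in $S$. Second, $\partial R$ also contains pieces through faces all of whose vertices in $\partial R$ lie in $S$. Third, using 3-connectivity of $\Pi(G)$ (Lemma~\ref{lma:skeleton-3-conn}) and minimality of the 4-cut, $\partial R$ is a single simple closed curve. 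Repeated vertices would yield a cut of size at most 3 in $G$.
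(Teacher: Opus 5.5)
Your first paragraph is essentially the paper's whole proof, but you stop one line before the conclusion and then rely on a second step that you never complete. The neighbor-cycle argument shows the following for every $s\in S$ (each $s$ has neighbors in both $A$ and $B$ by 4-connectivity): each of the two arcs of the clockwise neighbor sequence of $s$ between an $A$-neighbor and a $B$-neighbor contains a vertex of $S\setminus\{s\}$. These two vertices are distinct, so $\delta(G[S])\ge 2$. With $|S|=4$ the proof is finished. A simple graph on four vertices with minimum degree at least $2$ is $C_4$, $K_4-e$ or $K_4$, and each of these contains a $4$-cycle. The paper cites Ore's theorem here; Dirac's theorem or direct inspection does the same. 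Your refinement via Lemma~\ref{lma:neighbor-connectivity}(ii) about single blockers at non-crossing positions is not needed.

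As written, however, the proposal obtains the cycle only from the separating-curve argument, and that step is a genuine gap, as you acknowledge yourself. You assert, but do not prove, three things: that $\partial R$ is a single simple closed curve, that each $s_i$ occurs on it exactly once, and that a repeated vertex would give a cut of size at most $3$. Note also that if $G-S$ has more than two components, $\partial R$ may a priori consist of several closed curves. Your description also mentions boundary ``pieces through faces'', but these do not occur. The boundary of a union of closed faces of $\Pi(G)$ consists only of skeleton edges, which, as you observe, have both ends in $S$.

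If you want to keep the skeleton picture, argue locally instead. Every neighbor of $s\in S$ lies on a face of $\Pi(G)$ incident with $s$, and no face meets both $A$ and $B$. So in the rotation of faces at $s$ there are at least two switches between faces meeting $A$ and faces not meeting $A$. Each switch occurs across a skeleton edge $sx$ with $x\in S$. This again gives minimum degree $2$, now in $\Pi(G)[S]$, and the same four-vertex conclusion. In either version, the only missing ingredient is this elementary degree count; no global Jordan-curve analysis is needed.
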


\begin{proof}
    Let $S$ be a 4-cut of $G$. Since $G$ is 4-connected, $S$ is a minimum vertex cut. 
For $v \in S$, $v$ has neighbors in at least two components of $G-S$, say $H_1$ and $H_2$. Let $x_1, x_2, \dots, x_\ell$ be the neighbors of $v$ in $G$ in clockwise order. By Lemma~\ref{lma:neighbor-connectivity}(i), $x_i x_{i+1} \in E(G)$ for all $i$.
If $x_i \in V(H_1)$ and $x_{i+1} \in V(H_2)$, then $x_i x_{i+1}$ connects $H_1$ and $H_2$, contradicting that $H_1$ and $H_2$ are distinct components of $G-S$. Thus, the neighbors of $v$ in $H_1$ and $H_2$ must be separated by vertices in $S$. Therefore, $v$ has at least two neighbors in $S$. Since $|S|=4$, by Ore's Theorem, $G[S]$ contains a 4-cycle.
\end{proof}

\begin{proof}[Proof of Theorem~\ref{thm:o2pg-2ext}]
    We first prove necessity. Suppose to the contrary that $G$ has a 4-cycle $C$ that separates an odd component. Then $C$ has a perfect matching of size 2, which cannot be extended to a perfect matching of $G$. Therefore, $G$ is not 2-extendable, a contradiction.

    Conversely, suppose to the contrary that $G$ is not 2-extendable. Thus, there is a matching $M = \{e_1, e_2\}$ that is not extendable, which means that $G' = G - V(M)$ has no perfect matching. By Theorem~\ref{thm:tutte}, there exists a subset $S' \subseteq V(G')$ such that $o(G' - S') > |S'|$. Let $S = S' \cup V(M)$ and let $Y$ be the set of odd components of $G - S$. Note that $G-S = G'-S'$. We then have $|Y| = o(G' - S') > |S'| = |S| - 4$. Since $|V(G')|$ is even, $o(G' - S')$ and $|S'|$ have the same parity, which forces $|Y| \ge |S| - 2$.

    Consider the bipartite graph $B(G,S)$. Since $G$ is 4-connected, for $B(G,S)$, each vertex in $Y$ is adjacent to at least 4 vertices in $S$. Thus we have
    \begin{equation} \label{eqn:thm-2ext-EBGS-lower}
        |E(B(G,S))| \ge 4|Y|.
    \end{equation}
    
    By Lemma~\ref{lma:bipartite-planar}, $B(G,S)$ is a planar bipartite graph. We have
    $$
    |E(B(G,S))| \le 2|V(B(G,S))| - 4 = 2(|S|+|Y|) - 4.
    $$
    
    The above two inequalities yield $|Y| \le |S| - 2$. Hence $|Y| = |S| - 2$, and $|E(B(G, S))| = 4|Y|$ from Ineq.~\eqref{eqn:thm-2ext-EBGS-lower}. The latter implies that every vertex $y_k \in Y$ has degree exactly 4 in $B(G,S)$. Furthermore, since $V(M) \subseteq S$, we have $|S| \ge 4$, which implies $|Y| = |S| - 2 \ge 2$. So any odd component in $G-S$ has exactly 4 neighbors in $S$, which form a 4-cut separating an odd component. By Lemma~\ref{lma:4cut-cycle}, the induced subgraph of this 4-cut contains a 4-cycle, which separates an odd component, a contradiction. This completes the proof.
\end{proof}

\begin{figure}[htbp]
    \centering
    \includegraphics[page=7, width=0.80\linewidth]{all_figures.pdf}
    \caption{Construction of a non-2-extendable 4-connected optimal 2-planar graph from $\mathcal{D}$ and $B$.}
    \label{fig:4-conn-counter-ex}
\end{figure}

\begin{rmk}
    Theorem~\ref{thm:o2pg-2ext} characterizes the 2-extendability of 4-connected optimal 2-planar graphs by the absence of a 4-cycle that separates an odd component. Here we show that a non-2-extendable 4-connected optimal 2-planar graph exists. 
    
    We first give a subdivided \emph{bipyramid} $B$ as the base graph of our construction as shown in Figure~\ref{fig:4-conn-counter-ex}(b), which has 14 vertices and 8 pentagonal faces. We embed one copy of the dodecahedron graph $\mathcal{D}$ (Figure~\ref{fig:4-conn-counter-ex}(a)) into each pentagonal face of $B$ by identifying a 5-cycle of $\mathcal{D}$ with the 5-cycle of the face. This process results in a pentangulation $\Pi(G)$. By inserting five mutually crossing edges into the interior of each pentagonal face of $\Pi(G)$, we obtain a 2-planar graph $G$.

    We verify that $G$ is optimal. Since $\mathcal{D}$ has 20 vertices, 30 edges, and 11 interior faces, we can compute that $\Pi(G)$ has $134$ vertices, $88$ faces and $220$ edges. Inserting 5 crossing edges into each face yields $|E(G)| = 220 + 88 \times 5 = 660$. So it is confirmed that $G$ is an optimal 2-planar graph.

    Since $G$ is 4-connected, $G$ is 1-extendable by Theorem~\ref{thm:o2pg-1ext}. However, $G$ is neither 5-connected nor 2-extendable by Theorem~\ref{thm:o2pg-2ext} as a 4-cycle (the base-cycle of $B$) separates two odd components of order 65.
\end{rmk}

\section{Proof of Theorem~\ref{thm:o2pg-not-5ext}}\label{sec:proof-not-5-ext}

We first introduce Dean's lemma which provides an important property of $n$-extendable graphs.

\begin{lma}[Dean~\cite{dean1992matching}] \label{lma:dean}
    Let $v$ be a vertex of degree $n + t$ in an $n$-extendable graph $G$. Then $G[N(v)]$ does not contain a matching of size $t$.
\end{lma}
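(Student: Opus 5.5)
The plan is to argue by contradiction. Suppose $G$ is $n$-extendable, $d_G(v)=n+t$, and $G[N(v)]$ contains a matching $M'$ of size $t$. I will build a matching $M$ of size $n$ with $M'\subseteq M$, $v\notin V(M)$ and $N(v)\subseteq V(M)$. Any perfect matching containing $M$ must match $v$ to some vertex, and that vertex would have to be a neighbour of $v$ not covered by $M$. Since there is none, $M$ is not extendable, which contradicts $n$-extendability.

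If $t\ge n$, then $M'$ contains a matching of size $n$. The degree count $2t\le n+t$ forces $t\le n$, so $t=n$. In that case $M'$ itself covers all $2n=n+t$ neighbours of $v$, and $M=M'$ already gives the contradiction. So the real case is $t<n$. Here $X:=N(v)\setminus V(M')$ has exactly $n-t\ge 1$ vertices, and I must cover $X$ using $n-t$ further edges that avoid $v$ and $V(M')$. If fewer edges suffice, I pad with arbitrary disjoint edges, which is possible since $G$ is large.

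To find these edges, I pass to $H=G-V(M')$. I will use the standard facts (Plummer) that an $n$-extendable graph is $(n+1)$-connected, and that deleting the vertices of a matching of size $t$ leaves an $(n-t)$-extendable graph. Thus $H$ has a perfect matching $F$, $d_H(v)=n-t$, and every vertex of $H$ has degree at least $n-t+1$. In $F$ the vertex $v$ is matched to some $x_0\in X$. The $F$-edges at the other vertices $x_1,\dots,x_{n-t-1}$ of $X$ avoid $v$, and together with $M'$ they already form $n-1$ edges covering $N(v)\setminus\{x_0\}$.

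The main obstacle is covering $x_0$ without using $v$. The plan is an exchange argument. $x_0$ has at least $n-t$ neighbours in $H$ other than $v$. If one of them, say $y$, is outside $X$ and outside the $F$-partners of $x_1,\dots,x_{n-t-1}$, I simply add $x_0y$. If $y=x_j$, then the edge $x_0x_j$ covers two vertices of $X$ at once, and I drop $x_j$'s $F$-edge. If $y$ is the $F$-partner of $x_j$, I replace $x_j$'s $F$-edge by $x_0y$ and re-route $x_j$ the same way, which amounts to following an $F$-alternating path. To ensure this terminates, I would choose the extension of $\{x_0y\}$ to a perfect matching of $H$ via the $1$-extendability of $H$ (valid since $n-t\ge1$). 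This yields a perfect matching $F'\ni x_0y$ in which $v$ is matched to a different vertex of $X$. I would then pick the choice minimizing the number of vertices of $X$ left uncovered by edges avoiding $v$. If this counting or exchange does not close cleanly, I would fall back on Hall's theorem in the bipartite auxiliary graph between $X$ and $V(H)\setminus(X\cup\{v\})$, using the degree bound $n-t+1$ together with the $(n-t+1)$-connectivity of $H$ to rule out a Hall violator.
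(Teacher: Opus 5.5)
\textbf{Gap in the main step.} The step you call the main obstacle, covering $x_0$ without using $v$, is never actually carried out, and the exchange argument you sketch for it does not work as described. Because $N_H(v)=X$, every perfect matching of $H$ matches $v$ to some vertex of $X$. So in every perfect matching of $H$, exactly one vertex of $X$ is left uncovered by $v$-avoiding edges, and the quantity you propose to minimize is constant. Extending $\{x_0y\}$ to a perfect matching $F'$ via $1$-extendability only moves the bad vertex to another $x'\in X$. Your alternating-path re-routing has no guaranteed endpoint other than $v$ itself, so nothing ensures it terminates.

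The Hall fallback can be made rigorous. A violator $A\subseteq X$ with $|N_H(A)\setminus(X\cup\{v\})|<|A|$ would make $(X\setminus A)\cup(N_H(A)\setminus(X\cup\{v\}))$ a vertex cut of $H$ of size at most $n-t-1$. This cut separates $A\cup\{v\}$ from the rest of $H$, which is nonempty since $|V(H)|\ge 2(n-t)+2$. As written, however, the proposal only gestures at this.

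\textbf{The fix is already in your proposal.} None of this is needed: your own sentence ``$d_H(v)=n-t$, and every vertex of $H$ has degree at least $n-t+1$'' is already a contradiction. Since $H=G-V(M')$ is $(n-t)$-extendable with $n-t\ge 1$, it is $(n-t+1)$-connected, so
\[
\delta(H)\ge n-t+1>n-t=d_H(v).
\]
Combine this with your separate and correct treatment of $t=n$, where $v$ is isolated in $G-V(M')$ so $M'$ cannot be extended. Together these give a complete proof. This is the standard argument behind Dean's lemma, which the paper itself only cites without proof. You should drop $F$, $x_0$ and the exchange entirely and stop at that contradiction.
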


\begin{proof}[Proof of Theorem~\ref{thm:o2pg-not-5ext}]
    Suppose that there exists a 5-extendable optimal 2-planar graph $G$. Since every $k$-extendable graph is $(k+1)$-connected~\cite{plummer1980n}, $G$ is 6-connected, which implies $\delta(G) \ge 6$. On the other hand, since $|E(G)|=5|V(G)|-10$, we have $\delta(G) \le 9$. Let $v$ be a vertex of minimum degree in $G$. Let $\Pi(G)$ be the plane skeleton of $G$ and $d_{\Pi}(v)$ the degree of $v$ in $\Pi(G)$. By Lemma~\ref{lma:edge-pattern-o2pg}, $d_G(v)$ is a multiple of 3 and $d_G(v)=3d_{\Pi}(v)$. Then $d_G(v) \in \{6, 9\}$.

    If $d_G(v) = 6$, then $d_{\Pi}(v) = 2$. However, $\Pi(G)$ is 3-connected by Lemma~\ref{lma:skeleton-3-conn}, a contradiction.

    \begin{figure}[htbp]
        \centering
        \includegraphics[page=8, width=0.40\linewidth]{all_figures.pdf}
        \caption{The induced subgraph $G[N(v)]$ for a vertex $v$ with degree 9. The bold edges form a matching of size 4.}
        \label{fig:case2-matching}
    \end{figure}

    If $d_G(v) = 9$, then $d_{\Pi}(v) = 3$. Since $G$ is 5-extendable, by Lemma~\ref{lma:dean}, the induced subgraph $G[N(v)]$ does not contain a matching of size 4. However, $v$ is incident to exactly three pentagonal faces in $\Pi(G)$. In each such face, the five boundary vertices induce a double-crossing configuration in $G$. As illustrated in Figure~\ref{fig:case2-matching}, this induced subgraph contains a matching of size 4 formed by three skeleton boundary edges and one crossing edge, a contradiction. Therefore, no optimal 2-planar graph is 5-extendable.
\end{proof}

\begin{figure}[htbp]
    \centering
    \includegraphics[page=9, width=0.50\linewidth]{all_figures.pdf}
    \caption{A 4-extendable optimal 2-planar graph whose planar skeleton is the dodecahedron.}
    \label{fig:dodecahedron}
\end{figure}

\begin{rmk}\label{rmk:o2pg-not-5-ext}
    We note that there exists a 4-extendable optimal 2-planar graph. Let $G$ be a 2-planar graph obtained from the dodecahedron graph by inserting 5 mutually crossing edges into each face (see Figure~\ref{fig:dodecahedron}). Then $G$ is an optimal 2-planar graph since $G$ has 20 vertices and $30 + 12 \times 5 = 90$ edges, satisfying $|E(G)| = 5|V(G)|-10$. Furthermore, it is verified by a computer program that $G$ is 4-extendable. 
\end{rmk}

\section{Proof of Theorem~\ref{thm:o2pg-dist3-mext}}\label{sec:proof-distance-ext}

In this section, we present the proof for Theorem~\ref{thm:o2pg-dist3-mext}, establishing the extendability of 6-connected optimal 2-planar graphs under distance constraint.

\begin{proof}[Proof of Theorem~\ref{thm:o2pg-dist3-mext}]
    Throughout the proof, the neighbors of any vertex are always considered clockwise according to a fixed drawing of $G$ as described in Lemma~\ref{lma:o2pg-structure}. Since $G$ is 5-connected, by Corollary~\ref{coro:o2pg-5conn-2-ext} $G$ is 2-extendable, and hence the theorem holds for $m \le 2$. Suppose to the contrary that $m \ge 3$ is the minimum integer such that there exists a 6-connected optimal 2-planar graph $G$ of even order which is not distance 3 $m$-extendable. 

    Let $M = \{e_1, \dots, e_m\}$ be a matching of $G$ such that $d(e_i, e_j) \ge 3$ for $1 \le i < j \le m$, and $M$ is not extendable. Let $G' \coloneqq G - V(M)$. Since $G'$ has no perfect matching, there exists a Tutte set, say $S$, by Tutte's Theorem (Theorem~\ref{thm:tutte}). 

    \begin{clm} \label{clm:tutte-set-equality}
        $o(G' - S) = |S| + 2$.
    \end{clm}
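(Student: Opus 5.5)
The plan is to get the lower bound from parity and the upper bound from the minimality of $m$. Throughout, $S$ is the Tutte set of $G' = G - V(M)$ fixed above.

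For the lower bound, note that $|V(G)|$ is even and $|V(M)| = 2m$, so $|V(G')|$ is even. Counting vertices in $G'-S$ modulo 2 gives $o(G'-S) \equiv |V(G')| - |S| \equiv |S| \pmod 2$. Since $S$ is a Tutte set, $o(G'-S) > |S|$, and the parity forces $o(G'-S) \ge |S|+2$.

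For the upper bound, I will use that $m \ge 3$ was chosen minimal. Then $G$ is distance 3 $(m-1)$-extendable. The order condition holds, since $|V(G)| \ge 2m+2 > 2(m-1)+2$. The matching $M_0 = M \setminus \{e_m\}$ has size $m-1$, and its edges are pairwise at distance at least 3, so $M_0$ is extendable. Equivalently, $G'' \coloneqq G - V(M_0)$ has a perfect matching.

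Write $e_m = xy$. Then $G' = G'' - \{x,y\}$. Put $S'' = S \cup \{x,y\} \subseteq V(G'')$; since $x,y \notin S$, we have $|S''| = |S|+2$, and $G''-S'' = G'-S$. Tutte's Theorem (Theorem~\ref{thm:tutte}) applied to $G''$ gives
$$o(G'-S) = o(G''-S'') \le |S''| = |S|+2.$$
Combined with the lower bound, this yields $o(G'-S) = |S|+2$.

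The only step needing care is the appeal to minimality. The minimal counterexample is taken over all 6-connected optimal 2-planar graphs of even order, so the same $G$ must be distance 3 $(m-1)$-extendable. When $m-1 \le 2$, this also follows directly from Corollary~\ref{coro:o2pg-5conn-2-ext}.
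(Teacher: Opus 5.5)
Your proposal is correct and follows essentially the same route as the paper. Parity gives $o(G'-S)\ge |S|+2$, and the minimality of $m$ applied to $M$ minus one edge, combined with Tutte's theorem on $G''$ with the enlarged set $S\cup\{x,y\}$, gives the matching upper bound; your explicit check that $|V(G)|\ge 2(m-1)+2$ is a small detail the paper leaves implicit.
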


    \begin{proof}[Proof]
        Since $G$ is of even order, $|V(G')| = |V(G)| - 2m$ is even. Hence $o(G' - S) \equiv |S| \pmod 2$. Since $S$ is a Tutte set, we have $o(G' - S) \ge |S| + 2$. Take an edge $uv \in M$. Let $M' = M \setminus \{uv\}$. By the minimality of $m$, the graph $G'' = G - V(M')$ has a perfect matching. Let $S^* \coloneqq S \cup \{u, v\}$. Then $|S^*| = |S| + 2$ and $G'' - S^* = G' - S$. Therefore, $o(G' - S) = o(G'' - S^*) \le |S^*| = |S| + 2$. Hence $o(G' - S) = |S| + 2$.
    \end{proof}

    Let $K \coloneqq S \cup V(M)$. Then $G - K = G' - S$. Let $Y$ be the set of the odd components of $G - K$. By Claim~\ref{clm:tutte-set-equality}, $|Y| = |S| + 2$. We now consider the bipartite graph $B(G, K)$. Since $G$ is 6-connected, each vertex in $Y$ is adjacent to at least 6 vertices in $K$. Thus we have
    \begin{equation}\label{eqn:lower-bound-EBGK}
        |E(B(G,K))| \ge 6|Y| = 6(|S| + 2) = 6|S| + 12.
    \end{equation}

    Moreover, since $B(G, K)$ is a planar bipartite graph by Lemma~\ref{lma:bipartite-planar}, we obtain
    \begin{equation}\label{eqn:upper-bound-EBGK}
    \begin{split}
        |E(B(G,K))| &\le 2|V(B(G,K))| - 4 = 2(|K| + |Y|) - 4 \\
        &= 2(|S| + 2m + |S| + 2) - 4 = 4|S| + 4m.
    \end{split}
    \end{equation}

    Ineqs.~\eqref{eqn:lower-bound-EBGK} and~\eqref{eqn:upper-bound-EBGK} yield
    \begin{equation}\label{eqn:upper-bound-S}
        |S| \le 2m - 6.
    \end{equation}

    Next we will establish a lower bound for $|S|$ to produce a contradiction. For an edge $e = uv$, the neighborhood of $e$ in $G$, denoted by $N_G(e)$, is defined as the set of vertices adjacent to at least one endvertex of $e$, excluding $u$ and $v$ themselves, i.e., $N_G(e) = (N_G(u) \cup N_G(v)) \setminus \{u, v\}$. Accordingly, we define its neighbors in $S$ as $N_S(e) = N_G(e) \cap S$. We then establish the connection between $M$ and the odd components of $G - K$.

    \begin{clm} \label{clm:ei-neighbors}
        Each edge in $M$ has neighbors in at least two distinct odd components of $G - K$.
    \end{clm}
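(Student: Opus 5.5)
Fix $e_i = uv \in M$ and write $M' = M\setminus\{e_i\}$. The plan is to argue by contradiction through minimality of $m$, in the same spirit as the proof of Claim~\ref{clm:tutte-set-equality}. Suppose $N_G(e_i)$ meets at most one odd component of $G-K$. We will show that then $M'$ is not extendable. Since $M'$ is a matching of size $m-1$ whose edges are pairwise at distance at least 3, this contradicts the minimality of $m$ (note $m-1\ge 2$, and the case $m-1 \le 2$ is covered by Corollary~\ref{coro:o2pg-5conn-2-ext} anyway).

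The key step is to build a Tutte set for $G'' = G - V(M')$ by moving only one of $u,v$ into $S$ rather than both. There are two cases.

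\emph{Case 1: $N_G(e_i)$ meets no odd component.} Take $S'' = S$ in $G''$. The components of $G''-S$ are those of $G'-S$ together with $u$ and $v$, possibly merged with some even components. Since $u$ and $v$ are adjacent and do not touch any odd component, all odd components of $G'-S$ survive unchanged. The new component containing $u$ and $v$ has order $2$ plus a sum of even orders, so it is even. Hence $o(G''-S) \ge |S|+2 > |S|$, and $M'$ is not extendable.

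\emph{Case 2: $N_G(e_i)$ meets exactly one odd component $O$.} In $G''-S$, the vertices $u$ and $v$ merge with $O$ and with some even components. The merged component has odd order plus $2$ plus even orders, so it is still odd. Every other odd component is untouched. Thus $o(G''-S) = |S|+2 > |S|$ again. Either case gives a contradiction.

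Finally, $e_i$ does have some neighbours outside $K$: by 6-connectivity, $e_i$ has at least $6$ neighbours, and $|K|$ could be small. This fact is not actually needed for the argument, since both cases above are handled uniformly. I expect the main obstacle to be only bookkeeping, namely checking that merged components keep the parity claimed and that no two odd components fuse. Fusion cannot happen in Case 2, because $u$ and $v$ touch only one odd component. So the parity count is straightforward, and the distance condition on $M'$ is inherited from $M$.
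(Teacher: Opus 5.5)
Your argument is correct and is essentially the paper's: by minimality of $m$ the graph $G''=G-V(M')$ has a perfect matching, yet reinserting the adjacent pair $u,v$ into $G'-S$ leaves the number of odd components unchanged when $e_i$ touches at most one odd component, so $S$ itself violates Tutte's condition in $G''$. Your two-case parity check spells out what the paper states in one line; the only blemish is the announced plan of moving one of $u,v$ into $S$, which you never use, since you correctly take $S''=S$.
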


    \begin{figure}[htbp]
    \centering
    \includegraphics[page=10, width=0.7\linewidth]{all_figures.pdf}
    \caption{An illustration for parts of $G$: the matching $M = M' \cup \{e_m\}$, a Tutte set $S$ of $G'$ and odd components $O_i$ of $G'-S$.}
    \label{fig:matching-odd-component-connection}
\end{figure}

\begin{proof}
    Suppose for contradiction that an edge in $M$, say $e_m = u_mv_m$, has neighbors in at most one odd component of $G - K$. Let $M' = M \setminus \{e_m\}$ and $G'' = G - V(M')$. By the minimality of $m$, $M'$ can be extended to a perfect matching of $G$. That is, $G''$ has a perfect matching. Hence $o(G''-S)\leq |S|$ by Tutte's theorem. Note that $G' = G'' - u_m - v_m$. Since $e_m$ has a neighbor in at most one odd component of $G' - S$ (see Figure~\ref{fig:matching-odd-component-connection}), $o(G' - S) = o(G'' - S)$. So $o(G'' - S)=o(G' - S) = |S| + 2$ from Claim~\ref{clm:tutte-set-equality}, a contradiction. Hence Claim~\ref{clm:ei-neighbors} is proved.
\end{proof}

    For any edge $e_i = u_i v_i \in M$, we have the following two claims. In the following discussion, we keep in mind that the neighbors of $u_i$ can only lie in $S$ and the components of $G-K$ except $v_i$, since any two edges in $M$ are at distance at least 2 (in fact, at least 3).

    \begin{clm} \label{clm:l-comp-l-1-s}
        If $u_i$ has neighbors in $\ell$ distinct components of $G-K$, then $u_i$ is adjacent to at least $\ell - 1$ vertices in $S$, i.e., $|N_G(u_i) \cap S| \ge \ell - 1$.
    \end{clm}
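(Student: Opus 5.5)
We use the cyclic structure of the neighbourhood of $u_i$ given by Lemma~\ref{lma:neighbor-connectivity}(i). Fix the drawing of Lemma~\ref{lma:o2pg-structure}, and let $x_1, x_2, \dots, x_d$ be the neighbours of $u_i$ in clockwise order. By Lemma~\ref{lma:neighbor-connectivity}(i), consecutive neighbours are adjacent: $x_jx_{j+1} \in E(G)$ for all $j$ (indices mod $d$). Thus $N_G(u_i)$ contains the closed walk $W = x_1x_2\cdots x_dx_1$ in $G$.

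Next we classify the vertices on $W$. Every neighbour of $u_i$ lies in exactly one of three places: $S$, some component of $G-K$, or the vertex $v_i$. It cannot lie in $V(M)\setminus\{u_i,v_i\}$, since edges of $M$ are at distance at least $3$. Two consecutive vertices $x_j,x_{j+1}$ lying in different components of $G-K$ is impossible, because the edge $x_jx_{j+1}$ would join those components. Hence, going around $W$, any two maximal runs of vertices belonging to different components of $G-K$ must be separated by at least one vertex of $\{v_i\}\cup S$.

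Now we count. Suppose $u_i$ has neighbours in $\ell$ distinct components of $G-K$, with $\ell \ge 2$; the cases $\ell \le 1$ are trivial. Going around the cycle $W$, the neighbours in components of $G-K$ form at least $\ell$ maximal runs, one or more for each component. Consecutive runs from different components are separated by a nonempty set of \enquote{separator} vertices from $S\cup\{v_i\}$. Because $W$ is cyclic, there are at least $\ell$ such separating gaps, and they are pairwise disjoint. At most one of these gaps can contain $v_i$, since $v_i$ occurs once in the cyclic order. So at least $\ell-1$ gaps contain a vertex of $S$, and these vertices are distinct. This gives $|N_G(u_i)\cap S| \ge \ell-1$.

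The main care needed is the cyclic counting. With $\ell$ components appearing around a cycle, the number of boundaries between runs of different components is at least $\ell$ (for $\ell\ge2$), even if a component reappears several times. To see this, merge adjacent runs of the same component and note that every cyclic sequence using $\ell\ge 2$ labels with no two equal adjacent labels has at least $\ell$ positions where the label changes. We also rely on $v_i$ being a single vertex occupying a single position on $W$, so it can plug only one gap.
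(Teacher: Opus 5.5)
Your proof is correct and follows the same route as the paper. By Lemma~\ref{lma:neighbor-connectivity}(i), consecutive neighbours of $u_i$ are adjacent, so neighbours in different components of $G-K$ must be separated in the cyclic order by vertices of $S\cup\{v_i\}$, and $v_i$ can fill only one such gap; your explicit count (at least $\ell$ pairwise disjoint nonempty gaps, at most one containing $v_i$) just makes rigorous the one-line ``consequently'' step in the paper.
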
 

    \begin{figure}[htbp]
        \centering
        \includegraphics[page=11, width=0.40\linewidth]{all_figures.pdf}
        \caption{The situation for the neighbors of $u_i$ in $S\cup \{v_i\}$ and the components $H_i$ of $G-K$.}
        \label{fig:ui-s-neighbors}
    \end{figure}

    \begin{proof}
        Lemma~\ref{lma:neighbor-connectivity} establishes that any two consecutive neighbors of $u_i$ are adjacent in $G$. Since there are no edges between distinct components in $G - K$, the neighbors of $u_i$ in distinct components cannot be consecutive. Thus, there are vertices of $K$ between such neighbors, and such vertices must belong to $S \cup \{v_i\}$. Consequently, $u_i$ must be adjacent to at least $\ell - 1$ vertices in $S$ (see Figure~\ref{fig:ui-s-neighbors}).
    \end{proof}

    \begin{clm}\label{clm:ei-s-neighbors}
        $|N_S(e_i)|\geq 2.$
    \end{clm}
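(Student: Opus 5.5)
The plan is to look at the "ring" of vertices around the edge $e_i=u_iv_i$ given by the rotation system, and to show that this ring meets two distinct components of $G-K$. Separating those two components inside the ring will force at least two vertices of $S$. As noted before Claim~\ref{clm:l-comp-l-1-s}, every neighbour of $u_i$ or $v_i$ other than $u_i,v_i$ themselves lies in $S$ or in a component of $G-K$. This holds because edges of $M$ are at distance at least $3$, so no other vertex of $V(M)$ is adjacent to $e_i$.

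\textbf{Step 1: build a closed walk through $N_G(e_i)$.} Let $a$ and $b$ be the neighbours of $u_i$ immediately before and after $v_i$ in the clockwise order at $u_i$. By Lemma~\ref{lma:neighbor-connectivity}(i) the edges $av_i$ and $bv_i$ exist. Hence, in the clockwise order at $v_i$, the vertices $a$ and $b$ are exactly the neighbours of $v_i$ adjacent to $u_i$, since they are its neighbours next to $u_i$ in the drawing. Now traverse the neighbours of $u_i$ from $b$ around to $a$, skipping $v_i$, and then the neighbours of $v_i$ from $a$ around to $b$, skipping $u_i$. Lemma~\ref{lma:neighbor-connectivity}(i) makes consecutive vertices adjacent, so this gives a closed walk $W$ in $G-\{u_i,v_i\}$ whose vertex set is exactly $N_G(e_i)$.

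\textbf{Step 2: count the separators.} By Claim~\ref{clm:ei-neighbors}, $W$ contains vertices of at least two distinct odd components $O_1,O_2$ of $G-K$. There are no edges between distinct components of $G-K$, and all vertices of $W$ lie in $S$ or in components of $G-K$. So removing the occurrences of $S$-vertices from the cyclic sequence $W$ splits it into arcs, each lying in a single component. Since at least two components appear, there are at least two arcs, and hence at least two occurrences of $S$-vertices on $W$. If these occurrences involve two distinct vertices, then $|N_S(e_i)|\ge 2$ and we are done.

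\textbf{Step 3 (main obstacle): a single vertex of $S$ occurring twice.} Suppose $N_S(e_i)=\{s\}$. Then $s$ occurs exactly twice on $W$, and $W-s$ consists of one arc in $O_1$ and one arc in $O_2$. A vertex can occur twice on $W$ only if it is a neighbour of both $u_i$ and $v_i$, so $s$ is a common neighbour.

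I would then argue topologically that $\{u_i,v_i,s\}$ separates $O_1$ from $O_2$ in $G$:
\begin{itemize}
\item The two arcs of $W-s$ lie on opposite "sides" of the path $s\,u_i\,v_i\,s$ in the drawing.
\item To make this precise, I would use the local structure from Lemma~\ref{lma:o2pg-structure} and Lemma~\ref{lma:edge-pattern-o2pg}: the faces of $\Pi(G)$ around $u_i$ and $v_i$, and the fact that crossing edges stay inside a single pentagonal face.
\item This should yield a closed curve through $u_i,v_i,s$ that meets no other edge in a way connecting $O_1$ to $O_2$.
\end{itemize}
Then $\{u_i,v_i,s\}$ would be a vertex cut of size $3$, contradicting the $6$-connectivity of $G$.

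This Jordan-curve step is where I expect the real work to be. The crossing edges at $u_i$, $v_i$ and $s$ have to be checked carefully so that no edge of $G$ slips across the curve.

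A fallback if the topological argument gets messy is to apply Claim~\ref{clm:l-comp-l-1-s} to $u_i$ and to $v_i$ separately. This settles the case where one endpoint alone sees two components and $v_i$ (respectively $u_i$) is not the only separator at that endpoint. The remaining configurations would then be handled by the explicit $K_5$ structure of the two faces of $\Pi(G)$ containing the edge, together with Lemma~\ref{lma:neighbor-connectivity}(ii).
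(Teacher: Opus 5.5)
Your Steps 1--2 are a tidy way to reach the critical configuration, and they avoid the paper's case analysis on $|N_S(e_i)|\in\{0,1\}$ and on whether $v_is\in E(G)$. But the proof stops exactly where the claim has content. Step~3, the case $N_S(e_i)=\{s\}$, is only a plan: neither the Jordan-curve argument nor the fallback is carried out, so the claim is not proved as written.

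The missing idea is that no curve needs to be drawn, because Lemma~\ref{lma:neighbor-connectivity}(ii) does the work. Since $s$ occurs twice on $W$, it lies strictly inside both arcs, so $s\ne a,b$.
\begin{itemize}
\item At $u_i$, the two stretches of the rotation strictly between $v_i$ and $s$ lie in $O_1$ (the one containing $a$) and in $O_2$ (the one containing $b$).
\item At $v_i$, the arc from $a$ to $b$ has $s$ in its interior, and the two sides of $s$ lie in $O_1$ and $O_2$.
\end{itemize}
So in each case the neighbours flanking $v_i$ at $u_i$, flanking $s$ at $u_i$, and flanking $s$ at $v_i$ lie in distinct components of $G-K$, hence are non-adjacent. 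Lemma~\ref{lma:neighbor-connectivity}(ii) then makes $u_iv_i$, $u_is$ and $v_is$ all non-crossing, so $u_iv_is$ is a triangle of the pentangulation $\Pi(G)$. Being chordless, it cannot bound a face, so each side contains a vertex. Every crossing edge lies within a single face of $\Pi(G)$, so $\{u_i,v_i,s\}$ is a $3$-cut of $G$, contradicting $6$-connectivity. This is how the paper finishes.

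There is also a false statement in Step~1: $a,b$ are in general \emph{not} the neighbours of $v_i$ flanking $u_i$.
\begin{itemize}
\item If $u_iv_i$ is non-crossing and lies on the face $u_iv_ix_1y_1z_1$, then inside this face the neighbour of $u_i$ next to $v_i$ is $x_1$, whereas the neighbour of $v_i$ next to $u_i$ is $z_1$.
\item If $u_iv_i$ is a diagonal of the face $u_ipv_iqr$, the flanking pairs are $\{p,q\}$ at $u_i$ but $\{p,r\}$ at $v_i$.
\end{itemize}
You can repair the walk by taking the arc of $v_i$'s rotation from $a$ to $b$ that avoids $u_i$. The neighbours of $v_i$ it omits are $y_1,z_1$ and their counterparts in the other face, resp.\ $r$. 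Each of these lies in the $K_5$ of a face containing $u_iv_i$, so it is a neighbour of $u_i$ and already appears on the $u_i$-arc. Without this observation, $V(W)=N_G(e_i)$ is unjustified, and you need it to apply Claim~\ref{clm:ei-neighbors} to $W$.
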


    \begin{proof}[Proof]
        Assume for contradiction that $|N_S(e_i)| < 2$.

        If $|N_S(e_i)| = 0$, then neither $u_i$ nor $v_i$ has neighbors in $S$. By Claim~\ref{clm:l-comp-l-1-s}, $u_i$ and $v_i$ each has a neighbor in at most one component of $G-K$. To satisfy Claim~\ref{clm:ei-neighbors}, each of $u_i$ and $v_i$ must connect exactly one component, and these two components are distinct. However, by Lemma~\ref{lma:neighbor-connectivity}, the consecutive neighbors around $u_i$ are adjacent, so $v_i$ is connected to the component that is connected to $u_i$, a contradiction.

        Hence $|N_S(e_i)| = 1$.
        Let $N_S(e_i) = \{s\}$. Without loss of generality, assume that $u_i$ has a neighbor $s$ in $S$. Then $u_i$ has neighbors in at most two components by Claim~\ref{clm:l-comp-l-1-s}.
        
        We then show that $v_i$ is adjacent to $s$. Otherwise, $v_i s \notin E(G)$, which implies that $v_i$ has no neighbors in $S$. Then $v_i$ has a neighbor in at most one component by Claim~\ref{clm:l-comp-l-1-s} again, implying that $u_i$ is connected to at least one component from Claim~\ref{clm:ei-neighbors}. So we consider the following two cases.
        
        If $u_i$ has neighbors in two distinct components, then the two neighbors immediately preceding and succeeding $v_i$ belong to distinct components. It follows from Lemma~\ref{lma:neighbor-connectivity} that these two neighbors are adjacent to $v_i$, implying that $v_i$ has neighbors in two distinct components, a contradiction. If $u_i$ has a neighbor in exactly one component, then $v_i$ must connect the component since the consecutive neighbors around $u_i$ are adjacent by Lemma~\ref{lma:neighbor-connectivity}. That is, $u_i$ and $v_i$ connect only the same component, which is a contradiction to Claim~\ref{clm:ei-neighbors}. Hence, $v_i$ must be adjacent to $s$.

        Then, as with $u_i$, $v_i$ has neighbors in at most two components of $G-K$ by Claim~\ref{clm:l-comp-l-1-s}. We can have that one of $u_i$ and $v_i$ has neighbors in exactly two components of $G-K$. Otherwise, each of $u_i$ and $v_i$ has a neighbor in exactly one component. In this case $u_i$ and $v_i$ are connected to the same component by edges, contradicting Claim~\ref{clm:ei-neighbors}. Without loss of generality, suppose that $u_i$ has neighbors in exactly two components of $G-K$, say $H_1$ and $H_2$. By Lemma~\ref{lma:neighbor-connectivity}, the consecutive neighbors around $u_i$ are adjacent. Further, components $H_1$ and $H_2$ cannot be connected by an edge, so $H_1$ and $H_2$ are separated by $v_i$ and $s$, which implies that $v_i$ also has neighbors in components $H_1$ and $H_2$ (see Figure~\ref{fig:c1-c2-components}).
 \begin{figure}[htbp]
        \centering
        \includegraphics[page=12, width=0.6\linewidth]{all_figures.pdf}
        \caption{The situation for both $u_is$ and $v_is\in E(G)$, and $e_i = u_iv_i$ has neighbors in exactly two distinct components $H_1$ and $H_2$ of $G-K$.}
        \label{fig:c1-c2-components}
        \end{figure}

        Now we consider the neighbors of $u_i$. The neighbors of $u_i$ in $H_1$ and $H_2$ must be separated by $v_i$ and $s$. Let $w_1 \in V(H_1)$ and $w_2 \in V(H_2)$ be the two neighbors immediately preceding and succeeding $v_i$. Since $H_1$ and $H_2$ are distinct components in $G-K$, $w_1 w_2 \notin E(G)$. By Lemma~\ref{lma:neighbor-connectivity}(ii), $u_i v_i$ is a non-crossing edge. Similarly, $u_i s$ is a non-crossing edge. We apply this argument to the neighbors of $v_i$ and obtain that $v_i s$ is a non-crossing edge (see Figure~\ref{fig:c1-c2-components}). 

        Consequently, $\{u_i, v_i, s\}$ induces a 3-cycle formed entirely by non-crossing edges, which is thus a separating 3-cycle in the planar skeleton $\Pi(G)$. Thus, such a separating 3-cycle remains in $G$, which contradicts the 6-connectivity of $G$. Hence Claim~\ref{clm:ei-s-neighbors} is proved.
    \end{proof}

    For two distinct edges $e_i, e_j \in M$, we claim that $N_S(e_i) \cap N_S(e_j) = \emptyset$. If there exists a vertex $x \in N_S(e_i) \cap N_S(e_j)$, then $d(e_i, x) \le 1$ and $d(e_j, x) \le 1$. This implies $d(e_i, e_j) \le d(e_i, x) + d(x, e_j) \le 1 + 1 = 2$, which contradicts the distance constraint $d(e_i, e_j) \ge 3$. 

    Hence the $m$ subsets $N_S(e_1), \dots, N_S(e_m)$ are pairwise disjoint. By Claim~\ref{clm:ei-s-neighbors}, we obtain
    $$
    |S| \ge \sum_{i=1}^{m} |N_S(e_i)| \ge 2m,
    $$
    which contradicts $|S| \le 2m - 6$ in Ineq.~\eqref{eqn:upper-bound-S}. This completes the entire proof of Theorem~\ref{thm:o2pg-dist3-mext}.
\end{proof}

\printbibliography

@article{fujisawa2018matching,
  author = {Fujisawa, Jun and Segawa, Keita and Suzuki, Yusuke},
  title = {The matching extendability of optimal 1-planar graphs},
  journal = {Graphs and Combinatorics},
  volume = {34},
  number = {5},
  pages = {1089--1099},
  year = {2018},
  doi = {10.1007/s00373-018-1932-6}
}

@inproceedings{bekos2017optimal,
  author = {Bekos, Michael A. and Kaufmann, Michael and Raftopoulou, Chrysanthi N.},
  title = {On optimal 2- and 3-planar graphs},
  booktitle = {33rd International Symposium on Computational Geometry (SoCG 2017)},
  editor = {Aronov, Boris and Katz, Matthew J.},
  series = {Leibniz International Proceedings in Informatics (LIPIcs)},
  volume = {77},
  pages = {16:1--16:16},
  publisher = {Schloss Dagstuhl -- Leibniz-Zentrum f{\"u}r Informatik},
  address = {Dagstuhl, Germany},
  year = {2017},
  doi = {10.4230/LIPIcs.SoCG.2017.16}
}

@article{plummer1980n,
  author = {Plummer, Michael D.},
  title = {On $n$-extendable graphs},
  journal = {Discrete Mathematics},
  volume = {31},
  number = {2},
  pages = {201--210},
  year = {1980},
  doi = {10.1016/0012-365X(80)90037-0}
}

@incollection{plummer1988theorem,
  author = {Plummer, Michael D.},
  title = {A theorem on matchings in the plane},
  booktitle = {Graph Theory in Memory of G. A. Dirac},
  editor = {Andersen, Lars D{\o}vling and Jakobsen, Ivan Tafteberg and Thomassen, Carsten and Toft, Bjarne and Vestergaard, Preben Dahl},
  series = {Annals of Discrete Mathematics},
  volume = {41},
  pages = {347--354},
  publisher = {Elsevier},
  year = {1988},
  doi = {10.1016/S0167-5060(08)70473-4}
}

@article{plummer1992extending,
  author = {Plummer, Michael D.},
  title = {Extending matchings in planar graphs {IV}},
  journal = {Discrete Mathematics},
  volume = {109},
  number = {1},
  pages = {207--219},
  year = {1992},
  doi = {10.1016/0012-365X(92)90292-N}
}

@article{BEKOS20191038,
  author = {Bekos, Michael A. and Di Giacomo, Emilio and Didimo, Walter and Liotta, Giuseppe and Montecchiani, Fabrizio and Raftopoulou, Chrysanthi N.},
  title = {Edge partitions of optimal 2-plane and 3-plane graphs},
  journal = {Discrete Mathematics},
  volume = {342},
  number = {4},
  pages = {1038--1047},
  year = {2019},
  doi = {10.1016/j.disc.2018.12.002}
}

@article{dean1992matching,
  author = {Dean, Nathaniel},
  title = {The matching extendability of surfaces},
  journal = {Journal of Combinatorial Theory, Series B},
  volume = {54},
  number = {1},
  pages = {133--141},
  year = {1992},
  doi = {10.1016/0095-8956(92)90071-5}
}

@article{aldred2011proximity,
  author = {Aldred, Robert E. L. and Plummer, Michael D.},
  title = {Proximity thresholds for matching extension in planar and projective planar triangulations},
  journal = {Journal of Graph Theory},
  volume = {67},
  number = {1},
  pages = {38--46},
  year = {2011},
  doi = {10.1002/jgt.20511}
}

@article{aldred2014distance,
  author = {Aldred, Robert E. L. and Fujisawa, Jun},
  title = {Distance-restricted matching extension in triangulations of the torus and the {Klein} bottle},
  journal = {The Electronic Journal of Combinatorics},
  volume = {21},
  number = {3},
  pages = {\#P3.39},
  year = {2014},
  doi = {10.37236/2952}
}

@article{ZHANG2023247,
  author = {Zhang, Jiangyue and Wu, Yan and Zhang, Heping},
  title = {The maximum matching extendability and factor-criticality of 1-planar graphs},
  journal = {Discrete Applied Mathematics},
  volume = {338},
  pages = {247--254},
  year = {2023},
  doi = {10.1016/j.dam.2023.06.014}
}

@article{Pach1997crossing,
  author = {Pach, J{\'a}nos and T{\'o}th, G{\'e}za},
  title = {Graphs drawn with few crossings per edge},
  journal = {Combinatorica},
  volume = {17},
  number = {3},
  pages = {427--439},
  year = {1997},
  doi = {10.1007/BF01215922}
}

@article{Chen2005,
  author = {Chen, Zhi-Zhong and Kouno, Mitsuharu},
  title = {A linear-time algorithm for 7-coloring 1-plane graphs},
  journal = {Algorithmica},
  volume = {43},
  number = {3},
  pages = {147--177},
  year = {2005},
  doi = {10.1007/s00453-004-1134-x}
}

@article{Tutte1947,
  author = {Tutte, W. T.},
  title = {The factorization of linear graphs},
  journal = {Journal of the London Mathematical Society},
  volume = {s1-22},
  number = {2},
  pages = {107--111},
  year = {1947},
  doi = {10.1112/jlms/s1-22.2.107}
}

@article{ZHANG2024114172,
  author = {Zhang, Jiangyue and Wu, Yan and Zhang, Heping},
  title = {On restricted matching extension of 1-embeddable graphs in surfaces with small genus},
  journal = {Discrete Mathematics},
  volume = {347},
  number = {11},
  pages = {114172},
  year = {2024},
  doi = {10.1016/j.disc.2024.114172}
}

@article{aldred2004edge,
  author = {Aldred, Robert E. L. and Plummer, Michael D.},
  title = {Edge proximity and matching extension in planar triangulations},
  journal = {Australasian Journal of Combinatorics},
  volume = {29},
  pages = {215--224},
  year = {2004}
}

@article{ALDRED20102618,
  author = {Aldred, Robert E. L. and Plummer, Michael D.},
  title = {Distance-restricted matching extension in planar triangulations},
  journal = {Discrete Mathematics},
  volume = {310},
  number = {20},
  pages = {2618--2636},
  year = {2010},
  note = {Graph Theory --- Dedicated to Carsten Thomassen on his 60th Birthday},
  doi = {10.1016/j.disc.2010.03.027}
}

@article{huang2024matching,
  author = {Huang, Yuanqiu and Zhang, Licheng and Wang, Yuxi},
  title = {The matching extendability of 7-connected maximal 1-plane graphs},
  journal = {Discussiones Mathematicae Graph Theory},
  volume = {44},
  number = {2},
  pages = {777--790},
  year = {2024},
  doi = {10.7151/dmgt.2470}
}

\end{document}